\newtheorem{theorem}{Theorem}[section]
\newtheorem{lemma}{Lemma}[section]
\newtheorem{definition}{Definition}[section]
\newtheorem{remark}{Remark}[section]
\newtheorem{proposition}{Proposition}[section]
\numberwithin{equation}{section}
\newcommand{\tbf}{\textbf}
\begin{document}
\title{Rigidity theorem for a static triple with half harmonic Weyl curvature}

\author{Li Chen }
\address{School of Mathematics and Statistics, Hubei University, 430062, Wuhan, P. R. China}
\email{chenli@hubu.edu.cn}
\author{Xi Guo }
\address{School of Mathematics and Statistics, Hubei University, 430062, Wuhan, P. R. China}
\email{guoxi@hubu.edu.cn}

\date{}

\maketitle

\begin{abstract}
\noindent  In this paper, we prove that the static triple $(M^4,g,V)$ with $\delta W^\pm=0$ and positive scalar curvature
must be the standard hemisphere.
\end{abstract}
\medskip\noindent
{\bf 2000 Mathematics Subject Classification}: 53C24, 53C25.


\section{Introduction}
Let $(M^n,g)$ be a n-dimension complete Riemannian manifold. A static potential
is a non-trivial solution $V\in C^\infty(M)$ to the equation
\begin{equation}\label{7-4-1}
Hess V-\triangle Vg-VRic=0.
\end{equation}
In fact, the equation \eqref{7-4-1} was originally derived from the linearization of the
scalar curvature equation. Suppose $M$ is a compact manifold, we denote $\mathcal{G}$ be
the space of smooth Riemannian metrics on $M$.
It is known
that the scalar curvature R can be consider as a mapping $R: \mathcal{G}\rightarrow C^\infty(M)$.
Then the derivative  $dR$ at $g$  is given by
\begin{equation}
dR_g(h)=\frac{d}{dt}\Big|_{t=0}R_{g+th}=div_g^2h-\triangle tr_gh-\langle Ric,h\rangle_g.
\end{equation}
Using Stokes' theorem, the $L_2$-adjoint operator $dR_g^*$ of $dR_g$ with respect to the
canonical $L_2$-inner product defined by $g$ is
\begin{equation}
dR_g^*(V)=Hess_g V-\triangle_gV-VRic.
\end{equation}
So if $(M,g)$ admit a static potential, then $g$ is a singular point of the mapping $R$.

\begin{definition}[\cite{Am}]
A static triple is a triple $(M,g,V)$ consisting of a connected
n-dimensional smooth manifold $M$ with boundary $\partial M$ (possibly empty), a
complete Riemannian metric $g$ on $M$ and a static potential $V\in C^\infty(M)$ that
is non-negative and vanishes precisely on $\partial M$. Two static triples $(M_i, g_i, V_i)$,
$i = 1, 2$, are said to be equivalent when there exists a diffeomorphism $\phi:M_1\rightarrow M_2$
such that $\phi^*g_2=cg_1$ for some constant $c>0$ and $V_2\circ \phi=\lambda V_1$
for some constant $\lambda>0$.
\end{definition}
In \cite{K},  Kobayashi gave all examples of  connected complete conformally flat Riemannian
manifold which admits a non-trivial solution to \eqref{7-4-1}.
We state Theorem 3.1 of \cite{K} in the case of positive scalar curvature in dimension 4.
\begin{theorem}[\cite{K}]\label{thmK}
Let $(M^4, g, V)$ be a static
triple with  scalar curvature 12. If  $(M,g)$ is locally conformally flat,
then $(M^4,g,V)$ is covered by a static triple that is equivalent to one of the following:\\
i) The standard hemisphere $(\mathbb{S}^4_+, g_0, V=x_4)$.\\
ii) The standard cylinder over $S^3$ with the product metric,
$$\Big([0,\frac{\pi}{2}]\times\mathbb{S}^3,g=dt^2+\frac{1}{2}g_0,V=\frac{1}{2}sin2t\Big).$$
iii) For some $m\in(0,\frac{1}{\sqrt{3}})$,
$$\Big([r_h(m),r_c(m)]\times\mathbb{S}^3,g=\frac{dr^2}{1-r^2-\frac{2m}{r^2}}+r^2g_0,V=\sqrt{1-r^2-\frac{2m}{r^2}}\Big)$$
where $r_h(m)<r_c(m)$ are the positive zeroes of V.
Here a static triple $(\tilde{M},\tilde{g},\tilde{V})$ covers a static triple $(M,g,V)$ when
there exists a covering map $\pi : M\rightarrow M$ such that $\tilde{g}=\pi^*g$ and $\tilde{V}=\pi\circ V$.
\end{theorem}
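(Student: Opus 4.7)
The plan is to leverage the vanishing of the Weyl tensor together with the static equation to reduce the classification to an analysis of warped product metrics. First, I would normalize the static equation by tracing: contracting \eqref{7-4-1} gives $-3\Delta V = VR = 12V$, hence $\Delta V = -4V$, so that
\[
\mathrm{Hess}\, V = -4Vg + V\,\mathrm{Ric}.
\]
Together with the fact that $R \equiv 12$ is constant, this furnishes the differential constraints I will exploit.

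Next, I would pin down the algebraic structure of $\mathrm{Ric}$ on the regular set $\{\nabla V \neq 0\}$ using $W = 0$. Differentiating the static equation and commuting covariant derivatives via the Ricci identity introduces the full Riemann tensor, which in dimension four with $W = 0$ is expressible purely in terms of $\mathrm{Ric}$ and $R$. Combined with the contracted second Bianchi identity (which in the locally conformally flat case reduces the Schouten tensor to a Codazzi tensor), this produces a rigid algebraic constraint forcing $\nabla V$ to be an eigenvector of $\mathrm{Ric}$ and its orthogonal complement a Ricci-eigenspace of multiplicity three:
\[
\mathrm{Ric} = \alpha\, \nu\otimes \nu + \beta\,(g - \nu\otimes\nu), \qquad \nu = \tfrac{\nabla V}{|\nabla V|}.
\]
From this decomposition I would deduce that $|\nabla V|$ is locally constant on each level set $\Sigma_c = \{V = c\}$ and that the induced metric on $\Sigma_c$ evolves only by a conformal factor along the gradient flow of $V$, so that after reparametrizing by arclength $t$ along $\nabla V$ the metric takes the warped-product form $g = dt^2 + f(t)^2 h$ with $h$ a fixed metric on $\Sigma_c$; the condition $W = 0$ together with this warped structure forces $h$ to have constant sectional curvature.

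Finally, inserting this ansatz into the static equation together with $R = 12$ reduces everything to a coupled system of ODEs for $f(t)$ and $V(t)$. Integrating these ODEs and imposing smooth closure at the boundary $\{V = 0\}$, at interior critical points of $V$, and at the endpoints of the maximal interval on which the ansatz is regular yields exactly the three possibilities (i)--(iii), up to passing to a cover: a smooth interior critical point with $V > 0$ produces the hemisphere, a second zero of $V$ with non-vanishing gradient on both ends produces either the product cylinder (when $f$ is constant) or the one-parameter family in (iii) (when $f$ genuinely varies). The main obstacle will be the passage from $W = 0$ to the rigid eigenspace decomposition of $\mathrm{Ric}$: the commutator computation via the Ricci identity and the careful bookkeeping with the second Bianchi identity, together with showing that the decomposition holds globally on the regular set, form the technical heart of the argument.
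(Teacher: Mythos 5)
There is nothing in the paper to compare your argument against: this statement is imported verbatim (specialized to $n=4$, $R=12$) from Kobayashi's Theorem 3.1 in \cite{K}, and the authors use it only as a black box at the end of Section 4 once they have shown $W=0$. Judged on its own terms, your sketch is a faithful reconstruction of the route Kobayashi (and Kobayashi--Obata) actually take. The trace computation is right: contracting \eqref{7-4-1} gives $(1-n)\triangle V=VR$, hence $\triangle V=-4V$ and $Hess\,V=V(Ric-4g)=V(E-g)$, matching \eqref{3}--\eqref{4}. Your ``rigid eigenspace decomposition'' step is exactly the vanishing of the tensor $T_{ijk}$ that the paper introduces in Section 2: since $T_{ijk}=V_sW_{sijk}+2VW_{sijk,s}$, local conformal flatness kills $T$, and $T=0$ forces $\nabla V$ to be a Ricci eigenvector with a three-dimensional complementary eigenspace (this is the content of Lemma 4.2 in \cite{CC} / Lemma 4 in \cite{BDR} that the paper invokes). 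The passage to the warped product $g=dt^2+f(t)^2h$ with $h$ of constant curvature, and the ODE case analysis distinguished by whether $V$ has an interior critical point (hemisphere) or two zero levels with $f$ constant (cylinder) or nonconstant (the family (iii)), is the standard Kobayashi argument. Two points you should make explicit if you flesh this out: first, the Codazzi property of the Schouten tensor that you need comes from the vanishing of the Cotton tensor, which for $n=4$ follows from $W=0$ via $\delta W=0$; second, the conclusion is only ``covered by'' one of (i)--(iii) precisely because the constant-curvature fiber need only be a spherical space form rather than $\mathbb{S}^3$ itself, so the covering step is not optional. With those caveats your outline is sound, but it is a proof of Kobayashi's theorem, not of anything argued in this paper.
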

Motivated by the work of Kobayashi and Obata \cite{K,K-O},  Miao and Tam studied the M-T metric in \cite{MT}, which satisfies the equation
\begin{equation}\label{MTmetric}
Hess_g V-\triangle_gV-VRic=g,
\end{equation}
under Einstein or locally conformally flat
assumptions. While inspired by the trend developed by Cao and Chen in \cite{CC},
Barros,  Diogenes and Ribeiro  weaken the assumption  \cite{BDR}.
They proved that for a Bach-flat simply connected, compact manifold with M-T metric, if its
boundary isometric to a standard sphere $\mathbb{S}^3$, then it should
isometric to a geodesic ball in a simply connected space form.

In fact, it is well-known that on four-dimensional manifold $M^4$ the bundle of
two-forms splits $\Lambda^2=\Lambda_+^2+\Lambda_-^2$ into the $+1$-eigenspace of the Hodge $*$-operator
(self-dual two-forms) and $-1$-eigenspace (anti-self-dual two-forms). Then the Weyl
curvature tensor $W=W^++W^-$, where $W^\pm :  \Lambda_\pm^2\rightarrow\Lambda_\pm^2$.
and we say $W^\pm$ is harmonic if $\delta W^\pm=0$, where $\delta$ is the divergence.
In \cite{WWW}, J. Wu, P. Wu and Wylie proved that a four-dimensional gradient shrinking Ricci soliton with $\delta W^\pm=0$
is either Einstein, or a finite quotient of $\mathbb{S}^3\times \mathbb{R}$, $\mathbb{S}^2\times \mathbb{R}^2$, or $\mathbb{R}^4$.
Inspired by their work, we consider the 4-dimensional static triple  with $\delta W^\pm=0$ , our main results are the following.
\begin{theorem}\label{mainthm}
Let $(M^4,g,V)$ be a compact simply connected static triple with positive scalar curvature  and $\delta W^\pm=0$.
If the boundary isometric to a standard sphere $\mathbb{S}^3$,
then  $(M^4,g,V)$   is equivalent to the standard hemisphere $(\mathbb{S}^4_+, g_{can}, V=x_4)$.
\end{theorem}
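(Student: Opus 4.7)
The plan is to prove that the combination of $\delta W^\pm = 0$ with the static equation~\eqref{7-4-1} forces $(M^4,g)$ to be locally conformally flat, whereupon Kobayashi's classification (Theorem~\ref{thmK}) together with the boundary hypothesis singles out the hemisphere. First, I record the standard normalizations. Tracing~\eqref{7-4-1} gives $(n-1)\Delta V = -VR$, and differentiating~\eqref{7-4-1} once together with the contracted second Bianchi identity yields $V\nabla R\equiv 0$, so the scalar curvature $R$ is constant; rescaling, I take $R = 12$ as in Theorem~\ref{thmK}, turning~\eqref{7-4-1} into
\begin{equation*}
\nabla^2 V = V\,\mathrm{Ric} - 4Vg, \qquad \Delta V = -4V.
\end{equation*}

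Next, I extract the consequences of $\delta W^\pm = 0$. In dimension four, $\delta W^\pm = \tfrac12(\delta W \pm \delta(\ast W))$, so the hypothesis is equivalent to the simultaneous vanishing of $\delta W$ and $\delta(\ast W)$. Since $R$ is constant, $\delta W = 0$ is equivalent to the vanishing of the Cotton tensor $C_{ijk} := \nabla_i R_{jk} - \nabla_j R_{ik}$. Differentiating the static equation once more and antisymmetrizing the first two indices via the Ricci identity yields, after substituting the standard 4-dimensional curvature decomposition of $R_{ijkl}$ in terms of $W$ and $\mathrm{Ric}$, the pointwise identity
\begin{equation*}
VC_{ijk} = -W_{ijkl}\nabla^l V + \tfrac12(g_{jk}R_{il}-g_{ik}R_{jl})\nabla^l V + \tfrac12(R_{ik}\nabla_j V - R_{jk}\nabla_i V) + \tfrac{R}{6}(g_{jk}\nabla_i V - g_{ik}\nabla_j V).
\end{equation*}
Since $C\equiv 0$, this becomes a closed algebraic formula for $W_{ijkl}\nabla^l V$ in terms of $\mathrm{Ric}$, $\nabla V$, and $g$; a parallel calculation starting from $\delta(\ast W) = 0$ produces the companion identity for $(\ast W)_{ijkl}\nabla^l V$, and hence formulas for each $W^\pm_{ijkl}\nabla^l V$.

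The third ingredient is a Bochner-type integration. From $\delta W^\pm = 0$ one obtains the divergence identity $\nabla^i(V W^\pm_{ijkl}) = (\nabla^i V)\,W^\pm_{ijkl}$. Contracting with $W^\pm$, integrating by parts over $M$ (the boundary term vanishing because $V = 0$ on $\partial M$), and substituting the algebraic formulas for $W^\pm_{ijkl}\nabla^l V$ from the previous step, the Ricci-tensor contributions should collapse by virtue of the double tracelessness of $W^\pm$ in each pair of indices, producing a sign-definite integral identity that forces $W^\pm \equiv 0$ on $\{V > 0\}$, and hence everywhere by continuity. At this stage $(M^4,g)$ is locally conformally flat, and Theorem~\ref{thmK} applies: $(M^4,g,V)$ must be equivalent to one of the three listed models. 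Cases (ii) and (iii) both possess a disconnected boundary consisting of two round $3$-spheres, contradicting the hypothesis that $\partial M$ be (connected and) isometric to $\mathbb{S}^3$; only case (i) survives, and matching the boundary with the standard unit sphere pins down the scaling, yielding the required equivalence to $(\mathbb{S}^4_+, g_{\mathrm{can}}, V = x_4)$.

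The main obstacle is the Bochner step: verifying that after substitution, the integrand really cancels into a sign-definite expression. It is precisely here that both $\delta W^+ = 0$ and $\delta W^- = 0$ must be used separately, not merely their sum; $\delta W = 0$ alone determines $W\cdot\nabla V$ algebraically but leaves the self-dual components $W^\pm\cdot\nabla V$ unpinned, which is insufficient to close the estimate. The delicate algebraic cancellations required are analogous in spirit to the self-dual Weitzenbock computations of~\cite{WWW} in the shrinking Ricci soliton setting.
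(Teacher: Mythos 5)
Your overall strategy (prove $W^\pm\equiv 0$, conclude local conformal flatness, then invoke Theorem \ref{thmK} and use the boundary hypothesis to select the hemisphere) matches the paper's, and your algebraic step is sound as far as it goes: requiring $\delta W^\pm=0$ together with the static equation does pin down $\nabla V\lrcorner W^\pm$ in terms of $E$ and $\nabla V$. In the paper this appears as Lemma \ref{lem1} and Lemma \ref{lem6-19-1}: $\nabla V$ is an eigenvector of $E$ and, in the adapted eigenframe, $W^\pm_{1i1i}=-\tfrac14(E_{11}+3E_{ii})$ with the off-diagonal components vanishing, whence $|W^\pm|^2=9|E|^2-12E_{11}^2$. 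But that is only the starting point, not the conclusion.

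The genuine gap is the step you yourself flag as ``the main obstacle'': the claim that a single integration by parts of $\nabla^i(VW^\pm_{ijkl})$ against $W^\pm$, with the Ricci contributions ``collapsing by tracelessness,'' yields a sign-definite identity. It does not. The quantity controlled by that pairing is quadratic in $W^\pm$ with no definite sign, and no amount of tracelessness rescues it. What actually closes the argument in the paper is a much longer chain: the Weitzenb\"ock formula $\triangle|W^\pm|^2=2|\nabla W^\pm|^2+R|W^\pm|^2-144\det W^\pm$ valid under $\delta W^\pm=0$ (Proposition \ref{Ww}), Gursky's sharp inequality $|\det W^\pm|\le\frac{1}{24\sqrt6}|W^\pm|^3$, the test function $F=\tfrac12(V^2+|\nabla V|^2)$ with $\triangle F=V^2|E|^2+E(\nabla V,\nabla V)$, and the five integral identities of Lemmas \ref{lem6-21-1}--\ref{lem6-24-1} together with \eqref{6-19-5}; only after substituting the pointwise relation $|W^\pm|^2=9|E|^2-12E_{11}^2$ does the final integrand decompose into a sum of complete squares. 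None of the cubic ingredients ($\det W^\pm$, $V^3|E|^2|W^\pm|^2$) appear in your sketch, and without them the estimate cannot close. A second, related omission: you justify discarding boundary terms solely by $V=0$ on $\partial M$, but several of the required integrations by parts (e.g.\ the one behind \eqref{6-19-3}) produce boundary integrands proportional to $|\nabla V|^2\,\nu_jE_{ik}W^\pm_{ijkl}$, and $|\nabla V|=1$ on $\partial M$. The paper must first prove $W^\pm=0$ on $\partial M$ (Lemma \ref{lem4}), and it is precisely here --- not only in the final case analysis --- that the hypothesis that $\partial M$ is a round $\mathbb{S}^3$ is used. Your proposal uses that hypothesis only at the very end, which would leave the key integral identities unjustified.
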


\begin{theorem}\label{thm2}
Let $(M^4,g,V)$ be a compact connected static triple with positive scalar curvature  and $\delta W^\pm=0$.
If M has 2 connected components and each connected component of $\partial M$ is isometric to a sphere,
then  $(M^4,g,V)$ is covered by ii) or iii) in Theorem \ref{thmK}.
\end{theorem}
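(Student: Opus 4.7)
The plan is to derive local conformal flatness of $(M^4,g)$ via the half-harmonic Weyl hypothesis and then apply Kobayashi's classification, using the two-boundary hypothesis to exclude the hemisphere. The central analytic step is to upgrade $\delta W^\pm = 0$ to $W\equiv 0$; this parallels the corresponding step in the proof of Theorem \ref{mainthm}. Starting from the traced form of \eqref{7-4-1}, namely $\mathrm{Hess}\,V = V(\mathrm{Ric} - \tfrac{R}{3}g)$ together with the classical fact that $R$ is a positive constant on a static triple, I would use the second Bianchi identity in dimension four to translate $\delta W^\pm = 0$ into vanishing of the self-dual and anti-self-dual parts of the Cotton tensor. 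Then I would set up a weighted Bochner identity for $|W^\pm|^2$ by computing $\mathrm{div}(V\nabla |W^\pm|^2)$, substituting the static equation to rewrite the Ricci-commutator terms that appear when expanding $\Delta W^\pm$, and using $\delta W^\pm = 0$ to cancel the leading divergences. The outcome should be an identity of the schematic form ``non-negative bulk term $=$ divergence''.

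Integrating over $M$ produces a boundary integral over $\partial M = \{V=0\}$. On each component of $\partial M$, classical results for static triples (see \cite{Am}) give that the boundary is totally geodesic with $|\nabla V|$ a positive constant. The hypothesis that each component of $\partial M$ is isometric to a standard round $\mathbb{S}^3$ fixes its intrinsic Ricci curvature; combined with Gauss--Codazzi and total geodesy, this constrains the tangential components of $W^\pm$ along $\partial M$ enough to ensure the boundary integral has the favorable sign (or vanishes). Together with the bulk non-negativity, this forces $W^\pm\equiv 0$ on $M$, so $(M^4,g)$ is locally conformally flat. With local conformal flatness in hand, Theorem \ref{thmK} implies $(M^4,g,V)$ is covered by a triple equivalent to one of (i), (ii), or (iii). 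The standard hemisphere (i) has a single boundary component, so any quotient of it also has a single boundary component; since $\partial M$ has exactly two components, case (i) is excluded, leaving (ii) or (iii).

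The principal difficulty, as in Theorem \ref{mainthm}, is the Bochner step: extracting a clean sign from the weighted identity in the presence of both $\mathrm{Hess}\,V$ and the curvature-quadratic terms that couple $W^+$ with $W^-$ through the Ricci tensor. The half-harmonic (rather than full Einstein) hypothesis forces one to treat the two chiral halves independently and to exploit the algebraic structure of $W^\pm$ on $\Lambda^2_\pm$ in four dimensions. A secondary technical issue specific to Theorem \ref{thm2} is keeping the boundary analysis uniform when the two sphere components of $\partial M$ have \emph{different} radii, as happens in case (iii) of Kobayashi's list, so that neither the integral identity nor the sign argument relies on an artificial symmetry between the two boundary components.
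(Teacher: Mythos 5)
Your overall strategy coincides with the paper's: the paper proves Theorem \ref{thm2} by observing that the entire integral argument of Section 4 (the weighted identity built from $F=\tfrac12(V^2+|\nabla V|^2)$, the lemmas of Section 3, and Gursky's pointwise bound on $\det W^\pm$) only requires $W^\pm=0$ on $\partial M$, which Lemma \ref{lem4} supplies on each round-sphere boundary component regardless of its radius; the conclusion $W=0$ then feeds into Theorem \ref{thmK}, and the two boundary components rule out the hemisphere exactly as you argue (a quotient of $\mathbb{S}^4_+$ has connected boundary). Your worry about the two components having different radii is a non-issue for the same reason you suspect: Lemma \ref{lem4} is proved for an arbitrary constant sectional curvature $K$ and is applied to each component separately.

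There is, however, one genuine gap in your write-up. The hypothesis $\delta W^\pm=0$ is the \emph{half}-harmonic condition, i.e.\ the divergence of only one chiral half of $W$ vanishes, and correspondingly the Bochner/integral argument only yields the vanishing of that one half, say $W^+\equiv 0$. This is half conformal flatness (anti-self-duality), not local conformal flatness, and Theorem \ref{thmK} requires the latter. The paper bridges this with the auxiliary tensor $T^\pm_{ijk}=V_sW^\pm_{sijk}+2VW^\pm_{sijk,s}$: once $W^+=0$ and $\delta W^+=0$ one gets $T^+=0$, then the pointwise identity $|T|^2=2|T^+|^2=2|T^-|^2$ of Lemma \ref{6-25-8} forces the full tensor $T$ to vanish, and the Cao--Chen type argument (Lemma 4.2 of \cite{CC}, or Lemma 4 of \cite{BDR}) then upgrades $T=0$ to $W=0$. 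Your proposal jumps from ``$W^\pm\equiv 0$'' directly to ``locally conformally flat,'' which is only valid if you assume both $\delta W^+=0$ and $\delta W^-=0$; under the intended one-sided hypothesis the step via $T$ is indispensable and needs to be supplied.
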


\medskip\noindent
{\bf Acknowledgement.}
The second author thanks Prof. Xiaodong Wang for his helpful discussion.
 This research was also supported in part by
Hubei Key Laboratory of Applied Mathematics (Hubei University).

\section{Preliminaries}

\subsection{Setting and General facts}

\

For later convenience, we first state our conventions on Riemann
Curvature tensor and derivative notation. Let $M$ be a smooth
manifold and $g$ be a Riemannian metric on $M$ with Levi-Civita
connection $\nabla$. For a $(s, r)$-tensor field $\alpha$ on $M$,
its covariant derivative $D \alpha$ is a $(s, r+1)$-tensor field
given by
\begin{eqnarray*}
&&\nabla \alpha(Y^1, .., Y^s, X_1, ..., X_r, X)
\\&=&\nabla_{X} \alpha(Y^1, .., Y^s, X_1, ..., X_r)\\&=&X(\alpha(Y^1, .., Y^s, X_1, ..., X_r))-
\alpha(\nabla_X Y^1, .., Y^s, X_1, ..., X_r)\\&&-...-\alpha(Y^1, ..,
Y^s, X_1, ..., \nabla_X  X_r),
\end{eqnarray*}
the coordinate expression of which is denoted by
$$\nabla \alpha=(\alpha_{k_{1}\cdot\cdot\cdot
k_{r}, k_{r+1}}^{l_{1}\cdot\cdot\cdot l_{s}}).$$ We can continue to
define the second covariant derivative of $\alpha$ as follows:
\begin{eqnarray*}
&&\nabla^2 \alpha(Y^1, .., Y^s, X_1, ..., X_r, X, Y)
=(\nabla_{Y}(\nabla\alpha))(Y^1, .., Y^s, X_1, ..., X_r, X),
\end{eqnarray*}
the coordinate expression of which is denoted by
$$\nabla^2 \alpha=(\alpha_{k_{1}\cdot\cdot\cdot
k_{r}, k_{r+1}k_{r+2}}^{l_{1}\cdot\cdot\cdot l_{s}}).$$ In
particular, for a function $u: M\rightarrow \mathbb{R}$, we have the
following important identity
$$\nabla^2 f(X, Y)=YX(f)-(\nabla_{Y}X)f.$$
Similarly, we can also define the higher order covariant derivative
of $\alpha$:
$$\nabla^3 \alpha=\nabla(\nabla^2 \alpha), ... ,$$
and so on. Our convention for the Riemannian curvature (3,1)-tensor
Rm is defined by
\begin{equation*}
Rm(X, Y)Z=-\nabla_{X}\nabla_{Y}Z+\nabla_{Y}\nabla_{X}Z+\nabla_{[X,Y]}Z.
\end{equation*}
By picking a local coordinate chart $\{x^i\}_{i=1}^{n}$ of $M$, the
component of the (3,1)-tensor $Rm$ is defined by
\begin{equation*}
Rm\bigg({\frac{\partial}{\partial x^i}}, {\frac{\partial}{\partial
x^j}}\bigg){\frac{\partial}{\partial x^k}} = R_{ijk}^{\ \ \
l}{\frac{\partial}{\partial x^l}}
\end{equation*}
and $R_{ijkl}\doteq g_{lm}R_{ijk}^{\ \ \ m}$.  Ricci curvature and scalar curvature are given by $R_{ik}\doteq R_{ijk}^{\ \ \ j}$
and $R\doteq R_{ik}g^{ik}$ respectively. Then, we have the
standard commutation formulas (Ricci identities):
\begin{equation}\label{RI}
\aligned
\alpha_{k_{1}\cdot\cdot\cdot k_{r},\ j i}^{l_{1}\cdot\cdot\cdot
l_{s}}-\alpha_{k_{1}\cdot\cdot\cdot k_{r},\ i
j}^{l_{1}\cdot\cdot\cdot l_{s}}=&\sum_{a=1}^{r}R^{\ \ \ m}_{ijk_{a}}
\alpha_{k_{1}\cdot\cdot\cdot k_{a-1}m k_{a+1}\cdot\cdot\cdot
k_{r}}^{l_{1}\cdot\cdot\cdot l_{s}}\\
&-\sum_{b=1}^{s}R^{\ \ \
l_b}_{ijm} \alpha_{k_{1}\cdot\cdot\cdot k_{r}}^{l_{1}\cdot\cdot\cdot
l_{b-1}m l_{b+1}\cdot\cdot\cdot l_{r}},
\endaligned
\end{equation}
where $\alpha_{k_{1}\cdot\cdot\cdot k_{r},\ j
i}^{l_{1}\cdot\cdot\cdot l_{s}}$ denote the second covariant
derivative of the tensor $\alpha_{k_{1}\cdot\cdot\cdot
k_{r}}^{l_{1}\cdot\cdot\cdot l_{s}}$.
And the Weyl curvature tensor of $(M, g)$ is defined by
\begin{equation}\label{26}
W_{ijkl}=R_{ijkl}-(P_{ik}g_{jl}-P_{il}g_{jk}+P_{jl}g_{ik}-P_{jk}g_{il}),
\end{equation}
where $P$ is  $Schouten$ tensor given by
\begin{equation}\label{28}
P_{ij}=\frac{1}{n-2}(R_{ij}-\frac{R}{2(n-1)}g_{ij}).
\end{equation}

\subsection{a static triple}
\begin{lemma}\cite{Bou,FM}
The static equation \eqref{7-4-1} is equivalent to the equations
\begin{eqnarray}
\label{1}&&Hess V=V(Ric-\lambda g),\\
\label{2}&&\triangle V=-\lambda V.
\end{eqnarray}
where the scalar curvature $R=(n-1)\lambda$ is constant. Moreover, if  $(M, g)$
admits a static potential $V\in C^\infty(M)$, then\\
i) $0$ is a regular value of V;\\
ii) ${V = 0}$ is totally geodesic;\\
iii) $|\nabla V|$ is locally constant and positive on $\{V = 0\}$.
\end{lemma}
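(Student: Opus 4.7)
The plan is to split the lemma into two tiers: first derive \eqref{1}--\eqref{2} by tracing \eqref{7-4-1}, and then establish the constancy of the scalar curvature together with the geometric facts (i)--(iii). The hard ingredient is a clean open-and-closed argument along geodesics; the rest is a direct divergence computation using the second Bianchi identity. A mild logical point is the apparent circularity between (i) and constancy of $R$: (i) is used to show $\{V\ne 0\}$ is dense, which is then used to push constancy of $R$ to all of $M$. This is settled by observing that the ODE argument for (i) uses only the Hessian equation with a possibly non-constant $\lambda$, so (i) is established first.

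Tracing \eqref{7-4-1} with $g^{ij}$ gives $(n-1)\triangle V+VR=0$; setting $\lambda:=R/(n-1)$ (a priori a function) yields \eqref{2}, and substituting back produces \eqref{1}. For (i), let $A=\{p\in M:V(p)=0\text{ and }\nabla V(p)=0\}$; this set is trivially closed. To show it is open, fix $p\in A$ and a geodesic $\gamma$ with $\gamma(0)=p$; the function $f(t):=V(\gamma(t))$ satisfies $f''(t)=\operatorname{Hess}V(\gamma',\gamma')=f(t)\bigl(\operatorname{Ric}(\gamma',\gamma')-\lambda\bigr)$ with $f(0)=f'(0)=0$, so by ODE uniqueness $f\equiv 0$. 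Varying $\gamma$, $V\equiv 0$ in a normal neighbourhood of $p$, hence also $\nabla V\equiv 0$ there, so $A$ is open. By connectedness and non-triviality, $A=\varnothing$, proving (i).

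For the constancy of $R$, I differentiate \eqref{1}. The Bochner identity $\nabla^{j}V_{ij}=\nabla_{i}\triangle V+R_{ij}\nabla^{j}V$ combined with \eqref{2} gives $\nabla^{j}V_{ij}=-V\nabla_{i}\lambda-\lambda\nabla_{i}V+R_{ij}\nabla^{j}V$, while the contracted second Bianchi identity gives $\nabla^{j}(VR_{ij}-\lambda Vg_{ij})=R_{ij}\nabla^{j}V+\tfrac{V}{2}\nabla_{i}R-V\nabla_{i}\lambda-\lambda\nabla_{i}V$. Comparing and cancelling leaves $V\nabla R=0$, so $\nabla R\equiv 0$ on the open dense set $\{V\ne 0\}$ (dense by (i)) and hence on all of $M$ by continuity. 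Thus $\lambda$ is a genuine constant.

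Finally, (ii) and (iii) are direct consequences of \eqref{1}. Along $\{V=0\}$, \eqref{1} reduces to $\operatorname{Hess}V=0$; for $X,Y$ tangent to the level set, $YV\equiv 0$ on $\{V=0\}$ so $X(YV)=0$ there, and therefore $\langle\nabla_{X}Y,\nabla V\rangle=X(YV)-\operatorname{Hess}V(X,Y)=0$, i.e.\ the second fundamental form vanishes. For (iii), if $X$ is tangent to $\{V=0\}$, then $X|\nabla V|^{2}=2\operatorname{Hess}V(X,\nabla V)=2V\bigl(\operatorname{Ric}(X,\nabla V)-\lambda\langle X,\nabla V\rangle\bigr)=0$, so $|\nabla V|^{2}$ is locally constant on $\{V=0\}$, and positivity is immediate from (i). I do not anticipate any serious technical obstacle; the work is elementary once the algebraic identities and the ODE comparison are in place.
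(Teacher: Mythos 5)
Your proof is correct, and it reproduces the standard argument of Fischer--Marsden and Bourguignon: trace the static equation, run the second-order linear ODE uniqueness argument along geodesics for the open-and-closed set $\{V=0,\ \nabla V=0\}$, use the contracted second Bianchi identity plus the commutation $\nabla^j V_{ij}=\nabla_i\triangle V+R_{ij}\nabla^j V$ to get $V\nabla R=0$, and read off (ii) and (iii) from $\operatorname{Hess}V=V(\operatorname{Ric}-\lambda g)$ on the zero level set. The paper itself gives no proof (it only cites \cite{Bou,FM}), and your handling of the order of the steps --- establishing (i) before constancy of $R$ so that $\{V\neq 0\}$ is known to be dense --- is exactly the right way to avoid the apparent circularity.
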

So without loss of generality, when $n=4$, we assume $R=12$ and $|\nabla V|=1$ on $\{V = 0\}$.
In this case, the static equation \eqref{7-4-1} equivalent to
\begin{eqnarray}
\label{3}&&Hess V=V(E-g),\\
\label{4}&&\triangle V=-4V.
\end{eqnarray}
where $E=Ric-\frac{R}{n}g$ is the Einstein tensor.
We introduce the covariant 3-tensor $T_{ijk}$ (see (2.12) in \cite{BDR}):
\begin{equation*}
\aligned
T_{ijk}=&\frac{n-1}{n-2}(V_kR_{ij}-V_jR_{ik})
-\frac{R}{n-2}(g_{ij}V_k-g_{ik}V_j)\\&+\frac{1}{n-2}(V^sR_{ks}g_{ij}-V^sR_{js}g_{ik}),
\endaligned
\end{equation*}
which can be written as
\begin{equation}
T_{ijk}=\frac{n-1}{n-2}(V_kE_{ij}-V_jE_{ik})+\frac{1}{n-2}(V^sE_{ks}g_{ij}-V^sE_{js}g_{ik}).
\end{equation}

\begin{remark}
The indices of the tensor $T$ above is different with the one in \cite{BDR}.
 It is important to highlight that $T_{ijk}$ was defined similarly to the tensor $D$ in \cite{CC}.
\end{remark}

For convenient, now we omit the summation and identification through the duality defined by metric,
e.g. $W_{sijk,s}$ stands for $\sum_sW^s_{ijk,s}$.

\begin{lemma}
When $n=4$, if $(M,g,V)$ is a static triple, then
\begin{equation}
\aligned
T_{ijk}=V_sW_{sijk}+2VW_{sijk,s}.
\endaligned
\end{equation}
\end{lemma}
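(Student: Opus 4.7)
The plan is to verify the identity directly, combining three ingredients: the algebraic Weyl decomposition \eqref{26}, the once-contracted second Bianchi identity for $W$ (using that the static equation forces $R$ to be constant), and the Ricci identity \eqref{RI} applied to $\nabla V$ together with the static equation \eqref{3}. All derivative terms will cancel, leaving only the algebraic expression defining $T_{ijk}$.

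First I would contract \eqref{26} with $V^{s}$ to obtain
\begin{equation*}
V^{s}W_{sijk}=V^{s}R_{sijk}-V^{s}P_{sj}g_{ik}+V^{s}P_{sk}g_{ij}-V_{j}P_{ik}+V_{k}P_{ij}.
\end{equation*}
Next, since $R$ is constant, the once-contracted second Bianchi identity for the Weyl tensor gives $W_{sijk,s}=\tfrac{n-3}{n-2}(R_{ik,j}-R_{ij,k})$, so at $n=4$ we get $2VW_{sijk,s}=V(E_{ik,j}-E_{ij,k})$. To eliminate these derivatives of Ricci, I would apply the Ricci identity to the $1$-form $V_{i}$, yielding $V_{i,jk}-V_{i,kj}=R_{kjim}V^{m}$. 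Differentiating $V_{i,j}=V(R_{ij}-\lambda g_{ij})$ from \eqref{3} in $k$ (and in $j$) and substituting into the left-hand side, and using Riemann symmetries to identify $R_{kjim}V^{m}=V^{s}R_{sijk}$, produces
\begin{equation*}
V(R_{ij,k}-R_{ik,j})=V^{s}R_{sijk}-V_{k}R_{ij}+V_{j}R_{ik}+\lambda(V_{k}g_{ij}-V_{j}g_{ik}),
\end{equation*}
so that $2VW_{sijk,s}=-V^{s}R_{sijk}+V_{k}R_{ij}-V_{j}R_{ik}-\lambda(V_{k}g_{ij}-V_{j}g_{ik})$.

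Adding the two expressions, the $V^{s}R_{sijk}$ contributions cancel. Expanding $P$ via \eqref{28} and writing $R_{ij}=E_{ij}+\tfrac{R}{n}g_{ij}$ everywhere, the coefficients of the trace terms $V_{k}g_{ij}-V_{j}g_{ik}$ vanish identically thanks to $\lambda=\tfrac{R}{n-1}$, and the remaining terms reorganise into $\tfrac{n-1}{n-2}(V_{k}E_{ij}-V_{j}E_{ik})+\tfrac{1}{n-2}(V^{s}E_{sk}g_{ij}-V^{s}E_{sj}g_{ik})=T_{ijk}$. The main obstacle is purely combinatorial: correctly orienting the contracted Bianchi formula for $W$ when the divergence is over the first rather than the last index, matching the Ricci identity \eqref{RI} in the paper's index convention to the quantity $V^{s}R_{sijk}$, and verifying the cancellation of the trace terms via the identity $\lambda-\tfrac{R}{n}=\tfrac{R}{n(n-1)}$. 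The coefficient $2$ on the right-hand side is precisely $\tfrac{n-2}{n-3}$ specialised to $n=4$, which is why the identity is stated in dimension four.
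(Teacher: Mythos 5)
Your proof is correct, and it is exactly the routine verification the paper has in mind when it writes ``the proof is easy, we omit it here'': contract the Weyl decomposition \eqref{26} with $\nabla V$, use the contracted second Bianchi identity (with $R$ constant) to convert $2VW_{sijk,s}$ into $V(E_{ik,j}-E_{ij,k})$, and eliminate the Ricci derivatives via the commutation of $\nabla^2$ on $\nabla V$ together with \eqref{1}; the $V^sR_{sijk}$ terms cancel and the trace terms drop out because $\lambda=\tfrac{R}{n-1}$, leaving precisely $T_{ijk}$. I checked the coefficient bookkeeping ($1+\tfrac{1}{n-2}=\tfrac{n-1}{n-2}$ and $\lambda+\tfrac{R}{(n-1)(n-2)}=\tfrac{R}{n-2}$) and it closes up correctly.
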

The proof is easy, we omit it here.

\subsection{Half Weyl curvature}

\

For a 4-dimension Riemannian manifold, let $\{e_1, e_2, e_3, e_4\}$ be an orthonormal basis of $TM$, for any pair $(ij)$, $1\leq i<j\leq4$, denote $(i'j')$to be the dual of $(ij)$,
i.e., the pair such that $e_i\wedge e_j \pm e_{i'}\wedge e_{j'} \in \Lambda_\pm^2$. In other words, $(iji'j')=\sigma(1234)$
for some even permutation. It is well known that $W_{ijkl}=W_{i'j'k'l'}$, then
$$W^\pm_{ijkl}=\frac{1}{2}(W_{ijkl}\pm W_{ijk'l'}).$$
And we have the following
Weitzenbock formula (See (16.73) in \cite{Be}).
\begin{proposition}\label{Ww}\cite{Be}
Every orient Riemannian 4-manifold with $\delta W^\pm=0$ satisfies
\begin{equation}\label{w}
\triangle|W^\pm|^2=2|\nabla W^\pm|^2+R|W^\pm|^2-144\det W^\pm.
\end{equation}
\end{proposition}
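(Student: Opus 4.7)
The plan is to derive \eqref{w} by the standard Bochner/Weitzenbock route, since the identity is purely local and uses only the hypothesis $\delta W^\pm=0$. The starting point is the pointwise identity
$$\tfrac12\triangle|W^\pm|^2 \;=\; |\nabla W^\pm|^2 + \langle W^\pm,\triangle_\nabla W^\pm\rangle,$$
where $\triangle_\nabla=g^{ab}\nabla_a\nabla_b$ is the connection Laplacian on $(0,4)$-tensors. Everything then reduces to showing
$$\langle W^\pm,\triangle_\nabla W^\pm\rangle \;=\; \tfrac12 R\,|W^\pm|^2 - 72\det W^\pm.$$

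To compute $\triangle_\nabla W^\pm$, I would differentiate the second Bianchi identity $\nabla_a W_{bcde}+\nabla_b W_{cade}+\nabla_c W_{abde}=0$ and trace in $a$, producing an expression for $\nabla^a\nabla_a W_{bcde}$ in terms of $\nabla^a\nabla_b W_{acde}$ (and a symmetric partner). Commuting the two covariant derivatives via the Ricci identity \eqref{RI} leaves one first-derivative term $\nabla_b(\nabla^a W_{acde})$, which is a derivative of the divergence $\delta W$; because the hypothesis $\delta W^\pm=0$ gives $\delta W=\delta W^++\delta W^-=0$, this term vanishes. What survives is a purely algebraic contraction of $W$ against $Rm$ and $Ric$. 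Pairing the result with $W^\pm$ and projecting onto $\Lambda^\pm\otimes\Lambda^\pm$ kills the cross-terms by orthogonality and isolates a piece linear in $R$ giving $\tfrac12 R|W^\pm|^2$, plus a cubic piece in $W^\pm$. To evaluate the cubic piece I would view $W^\pm$ as a trace-free self-adjoint endomorphism of the rank-$3$ bundle $\Lambda^\pm$ and invoke Cayley-Hamilton: for any $3\times 3$ trace-free symmetric matrix, $\operatorname{tr}((W^\pm)^3)=3\det W^\pm$. Restoring the normalization produces the coefficient $-144$.

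The main obstacle here is bookkeeping rather than ideas. One has to be careful about (a) the conventions relating the $(0,4)$-tensor $W_{ijkl}$ to the induced endomorphism of $\Lambda^\pm$ (a factor of $\tfrac14$ in the norm), (b) the projection $W^\pm=\tfrac12(W\pm\ast W)$ together with the fact that mixed $\Lambda^+$/$\Lambda^-$ components of the curvature operator annihilate $W^\pm$ when paired, and (c) the signs appearing in the commutator \eqref{RI}. A practical way to short-circuit the index gymnastics is to diagonalize $W^\pm$ in an oriented orthonormal frame of $\Lambda^\pm$ at a single point, reducing the cubic identity to the elementary statement $\lambda_1^3+\lambda_2^3+\lambda_3^3=3\lambda_1\lambda_2\lambda_3$ for reals with $\lambda_1+\lambda_2+\lambda_3=0$, which makes the coefficient $144$ transparent.
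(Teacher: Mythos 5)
The paper gives no proof of this Proposition---it is quoted (up to the normalization discussed in the Remark that follows it) from Besse, (16.73)---so the only question is whether your derivation would actually produce the identity under the stated hypothesis. As written it would not, and the failure is at exactly the point that makes the formula nontrivial. In this paper ``$\delta W^\pm=0$'' means that \emph{one} of the two halves is divergence-free (this is the ``half harmonic Weyl'' condition of the title; the final step of the paper, deducing $T=0$ from $T^\pm=0$ via \eqref{7-9-6}, only makes sense because a single half is assumed). Your step ``$\delta W^\pm=0$ gives $\delta W=\delta W^++\delta W^-=0$'' therefore assumes strictly more than is given. Relatedly, the cyclic identity $\nabla_aW_{bcde}+\nabla_bW_{cade}+\nabla_cW_{abde}=0$ that you propose to differentiate is false for the Weyl tensor: the cyclic sum of $\nabla W$ equals a Kulkarni--Nomizu expression in the Cotton tensor, and in dimension four the Cotton tensor vanishes precisely when the \emph{full} divergence $\delta W$ does. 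So the first-order terms you discard are exactly the ones that survive under the actual hypothesis; to kill them one must use that contracting against the doubly self-dual tensor $W^{\pm}_{bcde}$ projects $\delta W$ onto $\delta W^{\pm}$ in the relevant two-form slot, an observation absent from your sketch.

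The second gap is the zeroth-order term. After commuting derivatives, the algebraic curvature terms involve the full Riemann tensor, hence a priori $W^{\mp}$ and the trace-free Ricci tensor as well as $R$ and $W^{\pm}$; your sketch asserts that projecting onto $\Lambda^\pm\otimes\Lambda^\pm$ ``kills the cross-terms and isolates a piece linear in $R$ plus a cubic piece,'' but that is the conclusion, not an argument---the cancellation of the Ricci and $W^{\mp}$ contributions is the main computation. In Besse/Derdzinski it is organized by viewing $W^\pm$ as a section of $S^2_0(\Lambda^\pm)$ and using that the curvature of the induced connection on the rank-three bundle $\Lambda^\pm$ is $W^\pm+\frac{R}{12}\,\mathrm{id}$ plus a block valued in $\Lambda^{\mp}$ that pairs to zero with everything in sight. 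What is sound in your proposal is the Bochner identity $\frac12\triangle|W^\pm|^2=|\nabla W^\pm|^2+\langle W^\pm,\triangle_\nabla W^\pm\rangle$ and the evaluation of the cubic term via $\lambda_1^3+\lambda_2^3+\lambda_3^3=3\lambda_1\lambda_2\lambda_3$ when $\lambda_1+\lambda_2+\lambda_3=0$, which does account for the coefficient $144$ in the paper's normalization of $|W^\pm|^2$.
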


\begin{remark}
The defintion of $|W^\pm|$ in \cite{Be} is $\frac{1}{4}\sum_{ijkl}(W^\pm_{ijkl})^2$, and in our paper
is $\sum_{ijkl}(W^\pm_{ijkl})^2$. Besides the laplacian differs from Besse¡¯s by a sign.
\end{remark}

And likely in \cite{WWW}, we also define
\begin{equation}\label{6-26-1}
T^\pm_{ijk}=V_sW^\pm_{sijk}+2VW^\pm_{sijk,s}.
\end{equation}
We get the following lemmas.
\begin{lemma}\label{6-25-8}
Let $(M,g,V)$ be a static triple, then
\begin{equation}\label{6-26-2}
\aligned
T_{ijk}^\pm=&\frac{3}{4}(V_kE_{ij}-V_jE_{ik})\pm\frac{3}{4}(V_{k'}E_{ij'}-V_{j'}E_{ik'})\\
&-\frac{1}{4}(V_sE_{sj}g_{ik}-V_sE_{sk}g_{ij})\mp\frac{1}{4}(V_sE_{sj'}g_{ik'}-V_sE_{sk'}g_{ij'}),
\endaligned
\end{equation}
in particular,
\begin{equation}\label{7-9-6}
|T|^2=2|T^+|^2=2|T^-|^2.
\end{equation}
\end{lemma}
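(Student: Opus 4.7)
The plan is to reduce everything to manipulating the Hodge duality $(j,k)\mapsto(j',k')$ on the last index pair. Since that duality is parallel, it commutes with the covariant derivative, and the identity $W^\pm_{sijk}=\tfrac12(W_{sijk}\pm W_{sij'k'})$ therefore carries over to divergences as well. Substituting into \eqref{6-26-1} and applying the previous lemma to each half yields the compact relation
\[
T^\pm_{ijk}=\tfrac12\bigl(T_{ijk}\pm T_{ij'k'}\bigr).
\]
The explicit form \eqref{6-26-2} then follows by plugging the closed form of $T_{ijk}$ (displayed just before the previous lemma, specialized to $n=4$) into this relation, reading off the dualized half via the substitutions $j\mapsto j'$, $k\mapsto k'$ and reshuffling signs in the trace terms.

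For the norm identity \eqref{7-9-6}, I would first combine $T^\pm_{ijk}=\tfrac12(T_{ijk}\pm T_{ij'k'})$ with the involutivity of the duality to get $T^\pm_{ij'k'}=\pm T^\pm_{ijk}$. This says that, for every fixed $i$, the 2-form $T^+_{i\,\cdot\,\cdot}$ lies in $\Lambda^2_+$ while $T^-_{i\,\cdot\,\cdot}$ lies in $\Lambda^2_-$; by orthogonality $\langle T^+,T^-\rangle=0$, and hence $|T|^2=|T^+|^2+|T^-|^2$. What remains is the symmetry $|T^+|^2=|T^-|^2$, which, via $|T^\pm|^2=\tfrac12(|T|^2\pm\langle T,\widetilde T\rangle)$ with $\widetilde T_{ijk}:=T_{ij'k'}$, is equivalent to the vanishing $\sum_{i,j,k}T_{ijk}T_{ij'k'}=0$.

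This last identity is the only non-bookkeeping step and where I expect the main work to lie, though it will turn out to be elementary. To handle it, I would fix a point and diagonalize $E=\operatorname{diag}(\mu_1,\mu_2,\mu_3,\mu_4)$ in an orthonormal frame. Every term in the closed form of $T_{ijk}$ then carries either a factor $\delta_{ij}$ or $\delta_{ik}$ (coming from $E_{ij}$, $E_{ik}$, or from the metric factors $g_{ij}$, $g_{ik}$), so $T_{ijk}=0$ whenever $i,j,k$ are pairwise distinct. Combined with antisymmetry in $(j,k)$, a nonzero entry forces $i\in\{j,k\}$; but then the complementary pair $\{j',k'\}=\{1,2,3,4\}\setminus\{j,k\}$ avoids $i$, the three indices $i,j',k'$ are pairwise distinct, and $T_{ij'k'}=0$ by the same observation. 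Every summand of $\sum T_{ijk}T_{ij'k'}$ therefore vanishes, which completes the identity. The principal obstacle is thus simply to keep the duality bookkeeping in \eqref{6-26-2} correct; the norm identity then falls out from the sparse support of $T$ in a diagonalizing frame.
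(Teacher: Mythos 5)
Your proof is correct. The paper itself omits the argument, deferring to Lemma 2.3 of \cite{WWW}, and what you write is precisely the expected route: the parallelism of the Hodge star gives $T^\pm_{ijk}=\tfrac12(T_{ijk}\pm T_{ij'k'})$, from which \eqref{6-26-2} is immediate, and the norm identity reduces to $\sum_{i,j,k}T_{ijk}T_{ij'k'}=0$, which your sparse-support observation (every term of $T_{ijk}$ in an $E$-diagonalizing orthonormal frame carries $\delta_{ij}$ or $\delta_{ik}$, so $T$ vanishes on pairwise distinct triples while $\{j',k'\}$ avoids $i$ whenever $i\in\{j,k\}$) settles cleanly. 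It is also a point in your favor that you diagonalize $E$ without invoking Lemma \ref{lem1}, since the present lemma does not assume $\delta W^\pm=0$ and so $\nabla V$ need not be an eigenvector of $E$ here.
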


\begin{lemma}\label{lem1}
If $\delta W^\pm=0$ and $\nabla V\neq0$ at $p\in M$, then $\nabla V$ is an eigenvector of $E$.
\end{lemma}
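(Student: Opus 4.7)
The plan is to exploit the explicit formula for $T^\pm_{ijk}$ in Lemma \ref{6-25-8} together with the definition \eqref{6-26-1}. Under the assumption $\delta W^\pm=0$ the second term in \eqref{6-26-1} drops out, giving the algebraic identity
\begin{equation*}
V_s W^\pm_{sijk}=T^\pm_{ijk},
\end{equation*}
whose right-hand side is the explicit expression \eqref{6-26-2} in $V_i$, $E_{ij}$ and their duals. Both sides are tensors at the point $p$, so everything reduces to linear algebra in a well-chosen frame.

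At $p$ I would work in a local orthonormal frame $\{e_1,e_2,e_3,e_4\}$ adapted to $\nabla V$, namely $e_1=\nabla V/|\nabla V|$, so that $V_1=|\nabla V|\neq 0$ and $V_\alpha=0$ for $\alpha=2,3,4$. The goal is to show $E_{1\alpha}=0$ for each such $\alpha$. The key observation is that setting $i=1$ makes the left-hand side vanish: $V_s W^\pm_{s1jk}=V_1 W^\pm_{11jk}=0$ by the antisymmetry of $W^\pm$ in its first two indices, so $T^\pm_{1jk}=0$ for all $j,k$.

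It then remains to substitute $i=1$ and $V=V_1 e_1$ into the right-hand side of \eqref{6-26-2} for a well-chosen $(j,k)$. For each $\alpha\in\{2,3,4\}$ I would pick $\{j,k\}=\{2,3,4\}\setminus\{\alpha\}$, which forces the dual pair $(j',k')$ to contain both $1$ and $\alpha$. With this choice nearly every term in \eqref{6-26-2} vanishes, since $V_\beta=0$ for $\beta\neq 1$ and $g_{1\beta}=0$ for $\beta\neq 1$; the only surviving terms are multiples of $V_1 E_{1\alpha}$, coming from the $\pm\frac{3}{4}(V_{k'}E_{ij'}-V_{j'}E_{ik'})$ piece and the $\mp\frac{1}{4}(V_s E_{sj'}g_{ik'}-V_s E_{sk'}g_{ij'})$ piece. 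They do not cancel, and combine to a nonzero multiple of $V_1 E_{1\alpha}$. Equating with $T^\pm_{1jk}=0$ yields $E_{1\alpha}=0$, and running the argument for $\alpha=2,3,4$ shows that $\nabla V$ is an eigenvector of $E$.

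The only real delicacy of the calculation is bookkeeping with the duality convention $(i,j,i',j')=\sigma(1234)$ for an even permutation $\sigma$ when evaluating $V_{j'}$, $V_{k'}$, $g_{ik'}$, $g_{1j'}$; I expect this to be the main (but purely routine) hurdle. No analytic input beyond the Weitzenbock-type identity hidden in $\delta W^\pm=0$ is needed — the statement is a pointwise algebraic consequence of $T^\pm=V\lrcorner W^\pm$ combined with the explicit form of $T^\pm$.
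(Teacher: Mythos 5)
Your proposal is correct and is essentially the argument the paper leaves implicit when it defers this lemma to Lemma 2.4 of \cite{WWW}: kill the left-hand side of $T^\pm_{ijk}=V_sW^\pm_{sijk}$ by taking $i=1$ in the adapted frame (antisymmetry of $W^\pm$ in its first two slots), then read off $E_{1\alpha}=0$ from the explicit formula \eqref{6-26-2}. The one point where the argument could collapse --- cancellation of the two surviving terms --- does not occur: with $\{j,k\}=\{2,3,4\}\setminus\{\alpha\}$ and $\{j',k'\}=\{1,\alpha\}$ the second piece contributes $\mp\tfrac34 V_1E_{1\alpha}$ and the fourth piece $\pm\tfrac14 V_1E_{1\alpha}$, giving $\mp\tfrac12 V_1E_{1\alpha}=0$ and hence $E_{1\alpha}=0$ for $\alpha=2,3,4$.
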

In fact, the proof of Lemma \ref{6-25-8} and Lemma \ref{lem1} is similar with Lemma 2.3 and Lemma 2.4 in \cite{WWW}, so we omit it here.

\begin{lemma}\label{lem6-19-1}
If $\nabla V\neq0$ at $p\in M$, let $e_1=\nabla V/|\nabla V|$, and choose $\{e_1,e_2,e_3,e_4\}$ be the unit eigenvectors  of $E$.
Then,
\begin{eqnarray}
\label{6-25-1}W^{\pm}_{1212}&=-\frac{1}{4}(E_{11}+3E_{22}),\\
\label{7-9-1}W^{\pm}_{1313}&=-\frac{1}{4}(E_{11}+3E_{33}),\\
\label{7-9-2}W^{\pm}_{1414}&=-\frac{1}{4}(E_{11}+3E_{44}).
\end{eqnarray}
And $W^{\pm}_{1i1k}=0$, if $i\neq k$.
\end{lemma}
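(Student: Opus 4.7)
\textbf{Proof proposal for Lemma \ref{lem6-19-1}.}

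My plan is to exploit the identity \eqref{6-26-1} together with the hypothesis $\delta W^\pm=0$ to reduce $T^\pm_{ijk}$ to a single contraction of $W^\pm$ with $\nabla V$, and then read off the components of $W^\pm$ by comparing with the algebraic expression for $T^\pm_{ijk}$ supplied by Lemma \ref{6-25-8}.

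First, since $\delta W^\pm=0$ means $W^\pm_{sijk,s}=0$, definition \eqref{6-26-1} collapses to
\begin{equation*}
T^\pm_{ijk}=V_s W^\pm_{sijk}.
\end{equation*}
At the point $p$ the chosen orthonormal frame satisfies $V_s=|\nabla V|\delta_{s1}$, so
\begin{equation*}
T^\pm_{ijk}=|\nabla V|\,W^\pm_{1ijk}.
\end{equation*}
Thus to compute $W^\pm_{1i1k}$ it suffices to compute $T^\pm_{i1k}$ in this frame and divide by $|\nabla V|$.

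Next, since $\{e_1,\dots,e_4\}$ diagonalizes $E$, we have $E_{ab}=E_{aa}\delta_{ab}$, and combined with $V_s=|\nabla V|\delta_{s1}$ many of the terms in the formula for $T^\pm$ from Lemma \ref{6-25-8} vanish. Using the duality convention $(12)\leftrightarrow(34)$, $(13)\leftrightarrow(42)$, $(14)\leftrightarrow(23)$, I would plug $(i,j,k)=(2,1,2)$ into \eqref{6-26-2}: the only surviving contributions are $\tfrac{3}{4}(V_2 E_{21}-V_1 E_{22})=-\tfrac{3}{4}|\nabla V|E_{22}$ and $-\tfrac{1}{4}(V_s E_{s1}g_{22}-V_s E_{s2}g_{21})=-\tfrac{1}{4}|\nabla V|E_{11}$, since the dual terms involve $V_3,V_4$ or off-diagonal $E$-entries. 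This yields \eqref{6-25-1}. The same computation with $(i,j,k)=(3,1,3)$ and $(4,1,4)$ gives \eqref{7-9-1} and \eqref{7-9-2}; the dual pairs land on indices $\neq 1$, so those terms vanish automatically.

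Finally, for $W^\pm_{1i1k}$ with $i\neq k$ and $i,k\in\{2,3,4\}$, I would evaluate $T^\pm_{i1k}$ in \eqref{6-26-2}: the first parenthesis involves $V_k E_{i1}$ and $V_1 E_{ik}$, both zero (either $k\neq 1$ or $E$ is diagonal); the second parenthesis involves $V_{k'}E_{ij'}$ and $V_{j'}E_{ik'}$ with $j'\neq 1$ or again off-diagonal $E$; the $g$-traces drop because $g_{ik}=g_{i1}=0$ and any surviving $V_s E_{s*}$ reduces to $V_1 E_{1*}=0$ for $*\neq 1$. Hence $T^\pm_{i1k}=0$, giving $W^\pm_{1i1k}=0$. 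The only point requiring care — and the main bookkeeping obstacle — is tracking the duality map $(jk)\mapsto(j'k')$ consistently so that the $\pm$ terms really do vanish; once that is laid out, the proof is a finite case-by-case check.
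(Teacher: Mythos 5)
Your proposal is correct and follows essentially the same route as the paper: use $\delta W^\pm=0$ to collapse \eqref{6-26-1} to $T^\pm_{ijk}=|\nabla V|W^\pm_{1ijk}$, then evaluate $T^\pm_{212}$, $T^\pm_{313}$, $T^\pm_{414}$, and $T^\pm_{i1k}$ ($i\neq k$) from \eqref{6-26-2} in the eigenframe of $E$. Your bookkeeping of the dual pairs and the surviving terms matches the paper's computation exactly.
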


\begin{proof}
For $\delta W^\pm=0$, from \eqref{6-26-1}, we have
\begin{equation}
T^\pm_{ijk}=|\nabla V|W^\pm_{1ijk}.
\end{equation}
Let $i=k=2$ and $j=1$, then $j'=3$, $k'=4$. By \eqref{6-26-2},
\begin{equation}
\aligned
T^\pm_{212}=&|\nabla V|W^\pm_{1212}\\
=&\frac{3}{4}(-|\nabla V|E_{22})-\frac{1}{4}(|\nabla V|E_{11}).
\endaligned
\end{equation}
And by a similar argument we get \eqref{7-9-1} and \eqref{7-9-2}.
If $j=1$ and $i\neq k$, then $j'\neq1$, $k'\neq1$, so
\begin{equation*}
T^\pm_{i1k}=|\nabla V|W^\pm_{1i1k}=0.
\end{equation*}

\end{proof}

We will use the orthonormal basis above on the rest part of the paper.
For convenient, we denote $\lambda_1=W^{\pm}_{1212}$, $\lambda_2=W^{\pm}_{1313}$, $\lambda_3=W^{\pm}_{1414}$.
From Lemma \ref{lem6-19-1}, we get
\begin{equation}\label{6-24-7}
\aligned
|W^\pm|^2=&16(\lambda^2_1+\lambda^2_2+\lambda^2_3)\\
=&(E_{11}+3E_{22})^2+(E_{11}+3E_{33})^2+(E_{11}+3E_{44})^2\\
=&9|E|^2-12E^2_{11}.
\endaligned
\end{equation}

\section{Some lemmas}

\begin{lemma}\label{lem4}
Let $(M^4,g,V)$ be a  static triple with scalar
curvature 12 and $\delta W^\pm=0$. If $\partial M$ is isometric to a standard sphere $\mathbb{S}^3$,
then $W^\pm=0$ on $\partial M$.
\end{lemma}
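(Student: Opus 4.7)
The plan is to show $E \equiv 0$ along $\partial M$, whence the identity \eqref{6-24-7}, $|W^\pm|^2 = 9|E|^2 - 12 E_{11}^2$, immediately gives $W^\pm = 0$. Fix $p \in \partial M$. Since $V$ vanishes and $|\nabla V| = 1$ along $\partial M$, the vector $e_1 := \nabla V|_p$ is a unit normal to $\partial M$ at $p$; by Lemma \ref{lem1} it is also an eigenvector of $E$, so I would extend to an orthonormal basis $\{e_1, e_2, e_3, e_4\}$ of $T_pM$ in which $E|_p$ is diagonal. It then suffices to prove $E_{ii} = 0$ for each $i$.

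The argument rests on producing two different expressions for the mixed sectional curvatures $R_{1i1i}$, $i \in \{2,3,4\}$. First, since $\partial M$ is totally geodesic and isometric to the unit $\mathbb{S}^3$, the Gauss equation gives $R_{ijij} = 1$ for distinct $i,j \in \{2,3,4\}$, and summing the tangential Ricci components produces $R = 2R_{11} + 6$. The hypothesis $R = 12$ then forces $R_{11} = 3$, hence $E_{11} = 0$, and the same computation yields the Gauss-side relation
\begin{equation*}
R_{1i1i} = R_{ii} - 2 = E_{ii} + 1, \qquad i \in \{2,3,4\}.
\end{equation*}
Second, I would differentiate the static equation $V_{ij} = V(E_{ij} - g_{ij})$ to obtain $V_{ij,k} = V_k(E_{ij} - g_{ij}) + V E_{ij,k}$, and apply the Ricci identity \eqref{RI} to the gradient $V_i$ to get $V_{ij,k} - V_{ik,j} = R_{kji}^{\ \ \ m} V_m$. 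Restricting to $\partial M$ (where $V = 0$ and $V_m = \delta_{m1}$), choosing $j = 1$ and $k = i \in \{2,3,4\}$, and simplifying both sides via the Riemann symmetries then give the second relation
\begin{equation*}
R_{1i1i} = 1 - E_{ii}, \qquad i \in \{2,3,4\}.
\end{equation*}

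Comparing the two displayed identities forces $E_{ii} = 0$ for $i \in \{2,3,4\}$. Combined with $E_{11} = 0$ and the diagonal form of $E$ in the chosen frame, this gives $E|_p = 0$, so $E \equiv 0$ on $\partial M$ and \eqref{6-24-7} finishes the proof. The main technical care, and essentially the only place delicate bookkeeping is needed, lies in the Ricci-identity step: one must track signs correctly and use the Riemann symmetries (e.g.\ $R_{i1i1} = R_{1i1i}$) to match the curvature term against the Gauss-side formula, a task made slightly delicate by the paper's nonstandard sign convention for the Riemann curvature tensor.
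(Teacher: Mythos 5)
Your argument is correct, and it reaches the conclusion by a genuinely different route from the paper. The paper never shows $E=0$ on the boundary; instead it computes $W^\pm_{1i1i}$ in two ways --- once from the $\delta W^\pm=0$ structure via Lemma \ref{lem6-19-1} (giving $W^\pm_{1i1i}=-\tfrac14(E_{11}+3E_{ii})$), and once from the Gauss equation together with the definition of the Weyl tensor and the Codazzi relation $W_{1ijk}=R_{1ijk}=0$ --- and equates the two to get $E_{ii}=K-1$ and hence $W^\pm_{1i1i}=0$. You instead compute the normal sectional curvatures $R_{1i1i}$ in two ways: from the Gauss equation ($R_{1i1i}=E_{ii}+1$ when $K=1$) and from commuting third derivatives of $V$ via the static equation and the Ricci identity ($R_{1i1i}=1-E_{ii}$), which I have checked is sign-consistent with the paper's conventions \eqref{RI} and \eqref{11}. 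Your route has the conceptual advantage of isolating where $\delta W^\pm=0$ enters (only at the final step through Lemma \ref{lem1} and \eqref{6-24-7}) and of establishing the stronger intermediate fact $E|_{\partial M}=0$, which holds for any static triple with unit-sphere boundary independently of any Weyl hypothesis. One caveat: as written your proof is tied to curvature $K=1$, whereas the paper's proof (and the application in Theorem \ref{thm2}, where boundary components are spheres of arbitrary radius) handles general constant curvature $K$; your method does extend --- the two relations then give $E_{ii}=K-1$ and $E_{11}=3-3K$, so $|E|^2=12(K-1)^2$ and \eqref{6-24-7} still yields $|W^\pm|^2=9|E|^2-12E_{11}^2=0$ --- but you should state that version if the lemma is to be quoted beyond the unit-sphere case.
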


\begin{proof}
We assume that $\partial M$ has constant sectional curvature $K$,
since it is isometric to a standard sphere $\mathbb{S}^3$. For
$\partial M=\{p \in M: V(p)=0\}$, then $\partial M$ is  totally
geodesic. So from Gauss equation and Codazzi equation, we know
\begin{eqnarray}
\label{11}&&R_{ijkl}=K(\delta_{ik}\delta_{jl}-\delta_{il}\delta_{jk}),\\
\label{21}&&R_{1ijk}=0
\end{eqnarray}
for $i,j,k,l>1$. Then,
\begin{eqnarray*}
12=R&=&R_{11}+R_{22}+R_{33}+R_{44}\\&=&R_{11}+R_{1212}+2K+R_{1313}+2K+R_{1414}+2K
\\&=&2R_{11}+6K,
\end{eqnarray*}
which implies
\begin{equation}\label{6-25-2}
E_{11}=3-3K.
\end{equation}
For
\begin{equation*}
R_{22}=R_{1212}+R_{3232}+R_{4242},
\end{equation*}
we have
\begin{equation*}
R_{1212}=E_{22}+3-2K,
\end{equation*}
which implies
\begin{equation*}
\aligned
W_{1212}&=R_{1212}-\frac{1}{2}(E_{11}+E_{22})-1\\&=\frac{1}{2}(E_{22}-E_{11})+2-2K.
\endaligned
\end{equation*}
Then
\begin{equation}\label{6-25-2-2}
W^\pm_{1212}=\frac{1}{2}W_{1212}=\frac{1}{4}(E_{22}-E_{11})+1-K.
\end{equation}
Combining \eqref{6-25-1}  with \eqref{6-25-2-2}, we get
$$E_{22}=K-1.$$
Then, using \eqref{6-25-2}, we obtain $W^\pm_{1212}=0$. With a same
argument, we know $W^\pm_{1313}=W^\pm_{1414}=0$. Then we finish the
proof.
\end{proof}

\begin{lemma}
Let $(M^4,g,V)$ be a  static triple with scalar
curvature 12 and $\delta W^\pm=0$, then the following equations
hold:
\begin{equation}\label{6-24-1}
|W^{\pm}|^2V_l+12V_jE_{ik}W^{\pm}_{jilk}=0,
\end{equation}
\begin{equation}\label{6-29-1}
E_{ik,j}W^{\pm}_{ijkl}=0,
\end{equation}
and
\begin{equation}\label{6-19-5}
\langle\nabla|W^{\pm}|^2,\nabla
V\rangle-4V|W^{\pm}|^2+12VE_{jl}E_{ik}W^{\pm}_{ijkl}=0.
\end{equation}
\end{lemma}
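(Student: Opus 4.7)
The unifying idea is the master relation
\[
T^\pm_{ijk}=V_sW^\pm_{sijk},
\]
which follows from \eqref{6-26-1} once the hypothesis $\delta W^\pm=0$ kills the term $2VW^\pm_{sijk,s}$. Combined with the explicit formula \eqref{6-26-2} for $T^\pm$ and the Weyl decomposition $\Lambda^2=\Lambda^2_+\oplus\Lambda^2_-$ in four dimensions, this relation supplies all three identities.

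For \eqref{6-24-1} I would verify the identity pointwise on $\{\nabla V\neq 0\}$ using the eigenbasis of Lemma \ref{lem6-19-1}, in which $V_i=|\nabla V|\delta_{i1}$ and $E_{ik}=E_{ii}\delta_{ik}$. The sum $V_jE_{ik}W^\pm_{jilk}$ then reduces to $|\nabla V|\sum_i E_{ii}W^\pm_{1ili}$. At $l=1$, the relevant components $W^\pm_{1i1i}$ for $i=2,3,4$ are $\lambda_1,\lambda_2,\lambda_3$, and a short algebraic computation using \eqref{6-25-1}--\eqref{7-9-2}, the trace-free relation $\sum_a E_{aa}=0$, and \eqref{6-24-7} gives
\[
E_{22}\lambda_1+E_{33}\lambda_2+E_{44}\lambda_3=-\tfrac14(3|E|^2-4E_{11}^2)=-\tfrac{1}{12}|W^\pm|^2,
\]
hence \eqref{6-24-1} at $l=1$. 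For $l\neq 1$, the contributions with $i\in\{1,l\}$ die by antisymmetry of $W^\pm$, and for $i\notin\{1,l\}$ the duality $W^\pm_{ijkl}=\pm W^\pm_{ijk'l'}$ turns $W^\pm_{1ili}$ into $\pm W^\pm_{1\alpha 1\beta}$ with $\alpha\neq\beta$, which vanishes by Lemma \ref{lem6-19-1}. Both sides vanish trivially on $\{\nabla V=0\}$, so by tensoriality \eqref{6-24-1} holds on $M$.

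For \eqref{6-29-1} I would use the Cotton tensor. The antisymmetry of $W^\pm$ in $(i,j)$ gives
\[
E_{ik,j}\,W^\pm_{ijkl}=\tfrac12(E_{ik,j}-E_{jk,i})\,W^\pm_{ijkl}=-C_{ijk}\,W^\pm_{ijkl},
\]
where $C_{ijk}$ is, up to a constant factor, the Cotton tensor, well defined because the scalar curvature is constant. The four-dimensional identity $\nabla^m W_{ijkm}=C_{ijk}$ together with $W=W^++W^-$ and $\delta W^\pm=0$ yields $C_{ijk}=\nabla^m W^\mp_{ijkm}$. For each fixed $k,l$ the inner contraction $\sum_{ij}(\nabla^m W^\mp_{ijkm})W^\pm_{ijkl}$ vanishes: as a 2-form in $(i,j)$ the first factor is anti-self-dual (covariant differentiation commutes with the parallel Hodge star), the second is self-dual, and $\Lambda^2_+\perp\Lambda^2_-$ in the natural inner product on $\Lambda^2$. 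Summing over $k$ gives $C_{ijk}W^\pm_{ijkl}=0$, which is \eqref{6-29-1}.

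For \eqref{6-19-5} I take the divergence of \eqref{6-24-1} in $l$. Using $\Delta V=-4V$ and $V_{jl}=V(E_{jl}-g_{jl})$, the $g_{jl}$ piece dies by Weyl's trace-free property, leaving $VE_{jl}E_{ik}W^\pm_{jilk}$ for the $V_{jl}$ contribution. The $E_{ik,l}W^\pm_{jilk}$ term reduces, via the symmetry $W^\pm_{jilk}=W^\pm_{ijkl}$ followed by a pair-swap, to an instance of \eqref{6-29-1} and so vanishes. The $\nabla^l W^\pm_{jilk}$ term vanishes by $\delta W^\pm=0$ after a pair-swap. Assembling these yields \eqref{6-19-5}. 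The principal obstacle is \eqref{6-29-1}: one must recognize the Cotton-tensor reformulation and exploit the orthogonality of $\Lambda^2_+$ and $\Lambda^2_-$; once this is done, \eqref{6-24-1} and \eqref{6-19-5} are comparatively mechanical.
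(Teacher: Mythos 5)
Your proposal is correct. For \eqref{6-24-1} and \eqref{6-19-5} you follow essentially the paper's own route: \eqref{6-24-1} is verified pointwise in the eigenframe of Lemma \ref{lem6-19-1} (your identity $E_{22}\lambda_1+E_{33}\lambda_2+E_{44}\lambda_3=-\tfrac14(3|E|^2-4E_{11}^2)=-\tfrac{1}{12}|W^\pm|^2$ is the same computation the paper performs via $E_{ii}=-\tfrac13(4W^{\pm}_{1i1i}+E_{11})$, and the off-diagonal cases are handled by the same duality trick), and \eqref{6-19-5} is obtained, as in the paper, by taking the divergence of \eqref{6-24-1} and using $\Delta V=-4V$, $V_{jl}=V(E_{jl}-g_{jl})$, the vanishing Ricci contraction of $W^\pm$, \eqref{6-29-1}, and $\delta W^\pm=0$. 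The genuine divergence is in \eqref{6-29-1}. The paper deduces it \emph{from} \eqref{6-24-1}: it applies the Ricci identity to $\nabla^3V$, uses the first Bianchi identity together with the Singer--Thorpe fact that $W^\pm\circ W^\pm$ commutes with the Hodge star to obtain $W_{sikj}W^{\pm}_{ijkl}=-\tfrac18|W^\pm|^2g_{ls}$, concludes $VE_{ik,j}W^{\pm}_{ijkl}=0$, and then invokes Lemma \ref{lem4} (which carries a boundary hypothesis not present in the statement of the present lemma) to handle the locus $V=0$. Your argument instead recognizes $\tfrac12(E_{ik,j}-E_{jk,i})$ as a multiple of the Cotton tensor (legitimate because $R$ is constant), uses the four-dimensional identity $\delta W=C$ and $\delta W^\pm=0$ to rewrite it as $\nabla^mW^{\mp}_{ijkm}$, and kills the contraction against $W^{\pm}_{ijkl}$ by the parallelism of the Hodge star and the orthogonality $\Lambda^2_+\perp\Lambda^2_-$. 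This buys something real: the argument is pointwise, independent of \eqref{6-24-1}, never divides by $V$, and so avoids the paper's detour through Lemma \ref{lem4} on $\partial M$ (which in the paper could anyway be replaced by a continuity/density argument). Both routes are valid; yours is cleaner and slightly more general for this particular identity.
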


\begin{proof}
If $\nabla V=0$, then \eqref{6-24-1} holds true. If $\nabla V\neq0$,
to prove \eqref{6-24-1}, we just need to check
\begin{equation*}
|W^{\pm}|^2V_1+12V_1E_{ik}W^{\pm}_{1i1k}=0
\end{equation*}
and
\begin{equation*}
V_1E_{ik}W^{\pm}_{1ilk}=0 \quad \mbox{for} \quad  2\leq l\leq 4,
\end{equation*}
which are equivalent to
\begin{equation*}
E_{ik}W^{\pm}_{1i1k}=-\frac{1}{12}|W^{\pm}|^2
\end{equation*}
and
\begin{equation*}
E_{ik}W^{\pm}_{1ilk}=0 \quad \mbox{for} \quad  2\leq l\leq 4.
\end{equation*}
We know from Lemma \ref{lem6-19-1}
\begin{eqnarray*}
E_{ik}W^{\pm}_{1i1k}&=&E_{ii}W^{\pm}_{1i1i}\\
&=&-\frac{1}{3}(4W^{\pm}_{1i1i}+E_{11})W^{\pm}_{1i1i}\\
&=&-\frac{1}{12}|W^{\pm}|^2
\end{eqnarray*}
and
\begin{eqnarray*}
& &E_{ik}W^{\pm}_{1i2k}\\
&=&E_{33}W^{\pm}_{1323}+E_{44}W^{\pm}_{1424}\\
&=&\pm E_{33}W^{\pm}_{1314}\pm E_{44}W^{\pm}_{1431}=0.
\end{eqnarray*}
Similarly, we have $E_{ik}W^{\pm}_{1i3k}=E_{ik}W^{\pm}_{1i4k}=0$. So
we finish the proof of \eqref{6-24-1}. Furthermore, since
$V_{ijk}W^{\pm}_{ijkl}=0$, we get from the equation \eqref{1}
\begin{equation*}
\aligned
V_sR_{sikj}W^{\pm}_{ijkl}&=(V_{ikj}-V_{ijk})W^{\pm}_{ijkl}=V_{ikj}W^{\pm}_{ijkl}\\&=
VE_{ik,j}W^{\pm}_{ijkl}+V_jE_{ik}W^{\pm}_{ijkl},
\endaligned
\end{equation*}
which implies in view of the definition of $W$,
\begin{equation}\label{9-111}
\aligned
VE_{ik,j}W^{\pm}_{ijkl}=V_sW_{sikj}W^{\pm}_{ijkl}-\frac{3}{2}V_jE_{ik}W^{\pm}_{ijkl}.
\endaligned
\end{equation}
By a direct calculation,
\begin{equation*}
\aligned
W_{sikj}W^{\pm}_{ijkl}=&-W_{skji}W^{\pm}_{ijkl}-W_{sjik}W^{\pm}_{ijkl}\\
=&-W_{ijks}W^{\pm}_{ijkl}-W_{sikj}W^{\pm}_{ijkl}.
\endaligned
\end{equation*}
It is known that a symmetric linear transformation on the space of 2-forms
$\Lambda^{2}$ commutes with the Hodge star $*$
if and only if its Ricci contraction is proportional to $g$ (see \cite{K2} or Theorem 1.3 in \cite{ST}).
Since $W^\pm$ commutes with the Hodge star $*$, hence, so does $W^\pm\circ W^\pm$.  Thus we have
\begin{equation*}
W_{ijks}W^{\pm}_{ijkl}=W^\pm_{ijks}W^{\pm}_{ijkl}=\frac{|W^\pm|^2}{4}g_{ls},
\end{equation*}
then
\begin{equation}
W_{sikj}W^{\pm}_{ijkl}=-\frac{|W^\pm|^2}{8}g_{ls}.
\end{equation}
Taking the equation above into \eqref{9-111} arrives
\begin{equation*}
\aligned
VE_{ik,j}W^{\pm}_{ijkl}=-\frac{|W^\pm|^2}{8}V_l-\frac{3}{2}V_jE_{ik}W^{\pm}_{ijkl}=0.
\endaligned
\end{equation*}
So \eqref{6-29-1} holds when $V\neq0$. Since for static triple $V$ vanish on $\partial M$, so from Lemma \ref{lem4} we know $W^\pm=0$.
Then \eqref{6-29-1} holds when $V=0$.
At last, taking divergence on both sides of \eqref{6-24-1}, we have
\eqref{6-19-5}.
\end{proof}

\begin{lemma}\label{lem6-19-2}
Let $(M^4,g,V)$ be a connected complete static triple with scalar
curvature 12 and $\delta W^\pm=0$. If $\nabla V\neq0$ at $p\in M$,
then we have at $p$
\begin{equation}\label{6-19-2}
E_{jl}E_{ik}W^{\pm}_{ijkl}=\frac{4}{3}\det
W^{\pm}-\frac{2}{9}E_{11}|W^{\pm}|^2.
\end{equation}
\end{lemma}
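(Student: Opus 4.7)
The plan is to diagonalize $E$ in the orthonormal frame provided by Lemma \ref{lem6-19-1} and reduce both sides of \eqref{6-19-2} to polynomial expressions in the four scalars $\lambda_1,\lambda_2,\lambda_3,E_{11}$. Since $E$ is diagonal in this basis, $E_{jl}=E_{jj}\delta_{jl}$ and $E_{ik}=E_{ii}\delta_{ik}$, so the left-hand side collapses to
\begin{equation*}
E_{jl}E_{ik}W^\pm_{ijkl}=\sum_{i\neq j}E_{ii}E_{jj}W^\pm_{ijij}=2\sum_{i<j}E_{ii}E_{jj}W^\pm_{ijij},
\end{equation*}
so the assertion becomes an algebraic identity once the six numbers $W^\pm_{ijij}$ are expressed through $\lambda_1,\lambda_2,\lambda_3$.

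For this I would record three preparatory facts. (a) The trace-freeness $g^{jk}W_{ijkl}=0$, written for each $i\in\{1,2,3,4\}$, combines into $W_{1212}=W_{3434}$, $W_{1313}=W_{2424}$, $W_{1414}=W_{2323}$, which pass to $W^\pm$ and give $W^\pm_{3434}=\lambda_1$, $W^\pm_{2424}=\lambda_2$, $W^\pm_{2323}=\lambda_3$. (b) Summing \eqref{6-25-1}--\eqref{7-9-2} together with $\mathrm{tr}\,E=0$ yields $\lambda_1+\lambda_2+\lambda_3=0$, and inverting those three identities produces
\begin{equation*}
E_{22}=-\frac{E_{11}+4\lambda_1}{3},\qquad E_{33}=-\frac{E_{11}+4\lambda_2}{3},\qquad E_{44}=-\frac{E_{11}+4\lambda_3}{3}.
\end{equation*}
(c) In the Singer--Thorpe orthonormal basis $\omega_k^\pm=\tfrac{1}{\sqrt 2}(e_a\wedge e_b\pm e_{a'}\wedge e_{b'})$ of $\Lambda^2_\pm$ adapted to $\{e_k\}$, Lemma \ref{lem6-19-1} combined with the self-duality $W^\pm_{ijkl}=\pm W^\pm_{ijk'l'}$ forces $W^\pm$ to be diagonal with eigenvalues $2\lambda_1,2\lambda_2,2\lambda_3$, so $\det W^\pm=8\lambda_1\lambda_2\lambda_3$.

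With these in hand the computation is direct: expanding
\begin{equation*}
E_{jl}E_{ik}W^\pm_{ijkl}=2\big[(E_{11}E_{22}+E_{33}E_{44})\lambda_1+(E_{11}E_{33}+E_{22}E_{44})\lambda_2+(E_{11}E_{44}+E_{22}E_{33})\lambda_3\big],
\end{equation*}
substituting the formulas from (b) and collecting terms, the $E_{11}^2$-coefficient is proportional to $\lambda_1+\lambda_2+\lambda_3$ and hence vanishes. What remains is $-\tfrac{32}{9}E_{11}(\lambda_1^2+\lambda_2^2+\lambda_3^2)+\tfrac{96}{9}\lambda_1\lambda_2\lambda_3$, and rewriting via \eqref{6-24-7} (which identifies $\lambda_1^2+\lambda_2^2+\lambda_3^2=|W^\pm|^2/16$) and via (c) yields \eqref{6-19-2} exactly.

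The main obstacle is the constant-tracking in step (c): one has to verify that the Singer--Thorpe eigenvalues of $W^\pm$ really are $2\lambda_i$ (and not, say, $\lambda_i$ or $4\lambda_i$), since this factor of $2$ is precisely what converts $\tfrac{96}{9}\lambda_1\lambda_2\lambda_3$ into $\tfrac{4}{3}\det W^\pm$. Everything else is routine algebra once the dictionary between $\{E_{ii}\}$ and $\{\lambda_i,E_{11}\}$ is pinned down.
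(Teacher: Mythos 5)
Your proposal is correct and follows essentially the same route as the paper: diagonalize $E$ in the frame of Lemma \ref{lem6-19-1}, use the duality identities $W^\pm_{3434}=\lambda_1$, $W^\pm_{2424}=\lambda_2$, $W^\pm_{2323}=\lambda_3$ together with $\lambda_1+\lambda_2+\lambda_3=0$, substitute $E_{jj}=-\tfrac13(E_{11}+4\lambda_{j-1})$, and identify $\det W^\pm=8\lambda_1\lambda_2\lambda_3$ and $|W^\pm|^2=16\sum\lambda_i^2$. The only cosmetic differences are that the paper splits off the $E_{11}$-terms before summing and quotes $W_{ijkl}=W_{i'j'k'l'}$ rather than deriving the pairing from trace-freeness; your normalization check in step (c) is exactly the factor the paper uses implicitly.
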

\begin{proof}
If $\nabla V\neq0$, then we have
\begin{eqnarray*}
E_{jl}E_{ik}W^{\pm}_{ijkl}&=&E_{ii}E_{jj}W^{\pm}_{ijij}\\
&=&2E_{11}\sum_{i\neq1}E_{ii}W^{\pm}_{1i1i}+2\sum_{1<i<j}E_{ii}E_{jj}W^{\pm}_{ijij}.
\end{eqnarray*}
From Lemma \ref{lem6-19-1} , we have
\begin{eqnarray*}
& &\sum_{i<j}E_{ii}E_{jj}W^{\pm}_{ijij}\\
&=&E_{22}E_{33}W^{\pm}_{1414}+E_{22}E_{44}W^{\pm}_{1313}+E_{33}E_{44}W^{\pm}_{1212}\\
&=&\frac{1}{9}[(4\lambda_1+E_{11})(4\lambda_2+E_{11})\lambda_3+(4\lambda_1+E_{11})(4\lambda_3+E_{11})\lambda_2\\
& &+(4\lambda_2+E_{11})(4\lambda_3+E_{11})\lambda_1]\\
&=&\frac{2}{3}\det W^{\pm}-\frac{1}{36}E_{11}|W^{\pm}|^2
\end{eqnarray*}
in view of $\lambda_1+\lambda_2+\lambda_3=0$. Thus \eqref{6-19-2}
holds.
\end{proof}

\begin{lemma}\label{lem6-21-1}
Let $(M^4,g,V)$ be a connected complete static triple with scalar
curvature 12 and $\delta W^\pm=0$. If $W^{\pm}=0$ on $\partial M$,
then
\begin{equation}\label{6-19-3}
\aligned
&\int_MV(7|W^{\pm}|^2E(\nabla V,\nabla V)-24|\nabla V|^2\det W^{\pm})dv\\
=&3\int_MV|W^{\pm}|^2|\nabla V|^2dv.
\endaligned
\end{equation}
\end{lemma}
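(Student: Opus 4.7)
The plan is to combine equation \eqref{6-19-5} with the pointwise identity \eqref{6-19-2}, after multiplying by $|\nabla V|^2$, and then perform a single integration by parts. The coefficients in the target identity ($7$, $24$, $3$) match up almost on the nose once one handles the transport term carefully.

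\textbf{Step 1: Globalize the pointwise identity \eqref{6-19-2}.} Equation \eqref{6-19-2} is stated only at points where $\nabla V\neq 0$, with $E_{11}$ denoting the eigenvalue of $E$ along $\nabla V/|\nabla V|$. Multiplying through by $|\nabla V|^{2}$ and using $E_{11}|\nabla V|^{2}=E(\nabla V,\nabla V)$ in the adapted frame, one gets
\begin{equation*}
|\nabla V|^{2}E_{jl}E_{ik}W^{\pm}_{ijkl}=\tfrac{4}{3}|\nabla V|^{2}\det W^{\pm}-\tfrac{2}{9}|W^{\pm}|^{2}E(\nabla V,\nabla V),
\end{equation*}
an identity that trivially continues to hold at zeros of $\nabla V$ since both sides vanish there. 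So this version is valid on all of $M$.

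\textbf{Step 2: Multiply \eqref{6-19-5} by $|\nabla V|^{2}$ and integrate.} Inserting the identity of Step 1 into $|\nabla V|^{2}\cdot\eqref{6-19-5}$ gives pointwise
\begin{equation*}
|\nabla V|^{2}\langle\nabla|W^{\pm}|^{2},\nabla V\rangle-4V|W^{\pm}|^{2}|\nabla V|^{2}+16V|\nabla V|^{2}\det W^{\pm}-\tfrac{8}{3}V|W^{\pm}|^{2}E(\nabla V,\nabla V)=0.
\end{equation*}
Integrating over $M$, the only nonstandard term is $I:=\int_{M}|\nabla V|^{2}\langle\nabla|W^{\pm}|^{2},\nabla V\rangle\,dv$.

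\textbf{Step 3: Integrate $I$ by parts.} Writing $X=|\nabla V|^{2}\nabla V$, I would use $I=\int_{M}\operatorname{div}(|W^{\pm}|^{2}X)\,dv-\int_{M}|W^{\pm}|^{2}\operatorname{div}(X)\,dv$. The divergence-theorem piece contributes $\int_{\partial M}|W^{\pm}|^{2}\langle X,\nu\rangle\,d\sigma$, which vanishes because $|W^{\pm}|^{2}\equiv 0$ on $\partial M$ by Lemma \ref{lem4}. For the interior piece, a short computation using $\operatorname{Hess}V=V(E-g)$ and $\Delta V=-4V$ gives
\begin{equation*}
\operatorname{div}(|\nabla V|^{2}\nabla V)=2V_{j}V_{jk}V_{k}+|\nabla V|^{2}\Delta V=2VE(\nabla V,\nabla V)-6V|\nabla V|^{2}.
\end{equation*}
Hence $I=-2\int_{M}V|W^{\pm}|^{2}E(\nabla V,\nabla V)\,dv+6\int_{M}V|W^{\pm}|^{2}|\nabla V|^{2}\,dv$.

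\textbf{Step 4: Collect the integrals.} Substituting this value of $I$ into the integrated version of the identity in Step 2 and gathering like terms yields
\begin{equation*}
-\tfrac{14}{3}\int_{M}V|W^{\pm}|^{2}E(\nabla V,\nabla V)\,dv+2\int_{M}V|W^{\pm}|^{2}|\nabla V|^{2}\,dv+16\int_{M}V|\nabla V|^{2}\det W^{\pm}\,dv=0,
\end{equation*}
and multiplying by $-\tfrac{3}{2}$ gives precisely \eqref{6-19-3}. The main obstacle I anticipated was handling the boundary contribution from the integration by parts, but Lemma \ref{lem4} (i.e.\ $W^{\pm}=0$ on $\partial M$) is tailor-made for exactly this and makes the calculation go through cleanly; the rest is bookkeeping of coefficients.
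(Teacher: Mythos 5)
Your proof is correct and is essentially the paper's argument with the product rule applied in a different order: the paper integrates the divergence of the weighted field $|\nabla V|^2V_jE_{ik}W^{\pm}_{ijkl}$ directly (using \eqref{6-24-1} and \eqref{6-29-1}), whereas you multiply the already-derived identity \eqref{6-19-5} by $|\nabla V|^2$ and then integrate the transport term by parts; both routes rest on the same inputs (\eqref{6-24-1}/\eqref{6-19-5}, Lemma \ref{lem6-19-2}, the static equations, and the vanishing of $W^{\pm}$ on $\partial M$) and produce the same coefficients. One tiny remark: the boundary term vanishes directly from the lemma's hypothesis $W^{\pm}=0$ on $\partial M$, so there is no need to invoke Lemma \ref{lem4} (which assumes the boundary is a round sphere).
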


\begin{proof}
Since $W^{\pm}=0$ on $\partial M$, then we have by the divergence
theorem together with \eqref{6-24-1} and \eqref{6-29-1}
\begin{eqnarray*}
0&=&12\int_M(|\nabla V|^2V_jE_{ik}W^{\pm}_{ijkl})_{,l}dv\\
&=&12\int_M|\nabla
V|^2V_{jl}E_{ik}W^{\pm}_{ijkl}dv-2\int_M|W^{\pm}|^2\nabla^2V(\nabla
V,\nabla V)dv.
\end{eqnarray*}
Thus, we have by the equation \eqref{1}
\begin{eqnarray*}
12\int_M|\nabla
V|^2(E_{jl}-g_{jl})E_{ik}W^{\pm}_{ijkl}dv=2\int_M|W^{\pm}|^2(E(\nabla
V,\nabla V)-|\nabla V|^2)dv.
\end{eqnarray*}
Then, noticing that $E_{ii}W^{\pm}_{ijij}=0$, we get from Lemma
\ref{lem6-19-2}
\begin{eqnarray*}
& &\int_MV(16|\nabla V|^2\det W^{\pm}-\frac{8}{3}E(\nabla V,\nabla V)|W^{\pm}|^2)dv\\
&=&2\int_MV|W^{\pm}|^2(E(\nabla V,\nabla V)-|\nabla V|^2)dv.
\end{eqnarray*}
Thus \eqref{6-19-3} holds.
\end{proof}

\begin{lemma}\label{lem6-21-2}
Let $(M^4,g,V)$ be a connected complete static triple with scalar
curvature 12 and $\delta W^\pm=0$. If $W^{\pm}=0$ on $\partial M$,
then
\begin{equation}\label{6-19-8}
\aligned
&2\int_MV|W^{\pm}|^2E(\nabla V,\nabla V)dv\\
=&-\int_MV^3|E|^2|W^{\pm}|^2dv-\int_MV^2E(\nabla|W^{\pm}|^2,\nabla
V)dv.
\endaligned
\end{equation}
\end{lemma}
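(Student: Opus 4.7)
The plan is to obtain \eqref{6-19-8} by applying the divergence theorem to the vector field $X$ whose $j$-th component is $X_j = V^2 |W^{\pm}|^2 E_{jl} V_l$, and then to simplify the resulting identity using the static equation together with the fact that the Einstein tensor is divergence-free. Since $V \equiv 0$ on $\partial M$, the field $X$ vanishes on the boundary, so the divergence theorem yields $\int_M X_{j,j}\,dv = 0$.

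Expanding by the product rule produces four terms:
\begin{equation*}
X_{j,j} = 2V V_j |W^{\pm}|^2 E_{jl} V_l + V^2 E_{jl} V_l (|W^{\pm}|^2)_{,j} + V^2 |W^{\pm}|^2 E_{jl,j} V_l + V^2 |W^{\pm}|^2 E_{jl} V_{lj}.
\end{equation*}
The third term vanishes because $R = 12$ is constant: the contracted second Bianchi identity $R_{jl,j} = \tfrac{1}{2} R_{,l}$, combined with $E = Ric - \tfrac{R}{n}g$, gives $E_{jl,j} = 0$. For the fourth term, I substitute the Hessian formula $V_{lj} = V(E_{lj} - g_{lj})$ from \eqref{3} and use $tr_g E = 0$ to obtain $E_{jl} V_{lj} = V(|E|^2 - tr_g E) = V|E|^2$, so this term reduces to $V^3 |W^{\pm}|^2 |E|^2$.

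Collecting the surviving contributions and integrating produces
\begin{equation*}
0 = 2\int_M V|W^{\pm}|^2 E(\nabla V, \nabla V)\,dv + \int_M V^2 E(\nabla|W^{\pm}|^2, \nabla V)\,dv + \int_M V^3 |E|^2 |W^{\pm}|^2\,dv,
\end{equation*}
which rearranges exactly to \eqref{6-19-8}.

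There is no serious obstacle in this argument; the only point requiring foresight is the choice of the auxiliary vector field $X$ weighted by $V^2$, which ensures both that the boundary term disappears and that the Hessian term in Step 2 acquires the correct factor of $V$ to pair with the static equation. Notably, the previous identities \eqref{6-24-1} and \eqref{6-29-1}---which were the engine for Lemma \ref{lem6-21-1}---are not needed here, since the decisive cancellations come directly from $\mathrm{div}\, E = 0$ and from $V_{ij} = V(E_{ij} - g_{ij})$.
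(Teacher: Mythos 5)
Your proposal is correct and is essentially identical to the paper's argument: the paper integrates $\int_M V|W^{\pm}|^2E_{ij}V_iV_j\,dv$ by parts against the factor $V_j$, which is precisely the divergence theorem applied to your field $X_j=V^2|W^{\pm}|^2E_{jl}V_l$, and both arguments then use $E_{jl,j}=0$ and $E_{jl}V_{lj}=V|E|^2$ to conclude. Your observation that the boundary term already vanishes because $V=0$ on $\partial M$ (so the hypothesis $W^{\pm}=0$ on $\partial M$ is not actually needed here) is also accurate.
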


\begin{proof}
Since $W^{\pm}=0$ on $\partial M$, it can be easily get by the
divergence theorem
\begin{equation}\label{6-24-2}
\aligned
&  \int_MV|W^{\pm}|^2E_{ij}V_iV_jdv\\=&-\int_M(V|W^{\pm}|^2E_{ij}V_i)_{, j}Vdv\\
=&-\int_MV|W^{\pm}|^2E_{ij}V_iV_jdv-\int_MV^3|E|^2|W^{\pm}|^2dv\\
&-\int_MV^2E(\nabla|W^{\pm}|^2,\nabla V)dv
\endaligned
\end{equation}
in view of $E_{ij,j}=0$.
\end{proof}

\begin{lemma}\label{lem6-24-1}
Let $(M^4,g,V)$ be a connected complete static triple with scalar
curvature 12 and $\delta W^\pm=0$, then
\begin{eqnarray*}
& &\int_MV|\nabla V|^2|W^{\pm}|^2dv\\
&=&\int_MV^3(|\nabla W^{\pm}|^2+8|W^{\pm}|^2-72\det
W^{\pm})dv+\int_MV^2\langle\nabla V,\nabla|W^\pm|^2\rangle dv.
\end{eqnarray*}
\end{lemma}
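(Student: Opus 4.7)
The plan is to reduce the identity to the Weitzenbock formula
\eqref{w} together with $\Delta V=-4V$ by means of a single integration-by-parts calculation. The crucial observation is that $\Delta V=-4V$ gives
\begin{equation*}
\Delta(V^2)=2V\Delta V+2|\nabla V|^2=2|\nabla V|^2-8V^2,
\end{equation*}
so that $|\nabla V|^2=\tfrac12\Delta(V^2)+4V^2$. Multiplying by $V|W^{\pm}|^2$ and integrating yields
\begin{equation*}
\int_M V|\nabla V|^2|W^{\pm}|^2\,dv=\tfrac12\int_M V|W^{\pm}|^2\Delta(V^2)\,dv+4\int_M V^3|W^{\pm}|^2\,dv.
\end{equation*}
So the main task is to evaluate $\int_M V|W^{\pm}|^2\Delta(V^2)\,dv$.

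The idea is to move the Laplacian off $V^2$ via Green's identity. Since $V=0$ on $\partial M$, the boundary contributions $\int_{\partial M}\bigl(V|W^{\pm}|^2\,\partial_\nu V^2-V^2\,\partial_\nu(V|W^{\pm}|^2)\bigr)d\sigma$ vanish, so
\begin{equation*}
\int_M V|W^{\pm}|^2\Delta(V^2)\,dv=\int_M V^2\Delta(V|W^{\pm}|^2)\,dv.
\end{equation*}
Expanding with the product rule and inserting $\Delta V=-4V$ gives
\begin{equation*}
\Delta(V|W^{\pm}|^2)=-4V|W^{\pm}|^2+2\langle\nabla V,\nabla|W^{\pm}|^2\rangle+V\Delta|W^{\pm}|^2.
\end{equation*}

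Finally, I substitute Proposition \ref{Ww} for $\Delta|W^{\pm}|^2$, noting that $R=12$, and collect the terms. A straightforward bookkeeping then produces
\begin{equation*}
\tfrac12\int_M V|W^{\pm}|^2\Delta(V^2)\,dv
=\int_M V^3\bigl(|\nabla W^{\pm}|^2+6|W^{\pm}|^2-72\det W^{\pm}\bigr)dv
-2\int_M V^3|W^{\pm}|^2\,dv+\int_M V^2\langle\nabla V,\nabla|W^{\pm}|^2\rangle\,dv,
\end{equation*}
and adding $4\int V^3|W^{\pm}|^2\,dv$ assembles exactly the claimed identity. The only place that requires attention is tracking the constants from $\Delta V=-4V$, $R=12$ and the $-144\det W^{\pm}$ in \eqref{w}; no boundary terms survive because every term carries a factor of $V$ (indeed $V^2$) at $\partial M$.
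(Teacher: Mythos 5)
Your proposal is correct and follows essentially the same route as the paper: rewrite $|\nabla V|^2=\tfrac12\Delta(V^2)+4V^2$ using $\Delta V=-4V$, transfer the Laplacian onto $V|W^\pm|^2$ by Green's identity (boundary terms vanish since $V=0$ on $\partial M$), and substitute the Weitzenb\"ock formula \eqref{w}. The constants check out, so no further comment is needed.
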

\begin{proof}
Using the equation \eqref{2}, we can obtain
\begin{equation*}
\aligned
&\int_MV|\nabla V|^2|W^{\pm}|^2dv\\=&\int_MV(\frac{1}{2}\Delta(V^2)-V\Delta V)W^{\pm}|^2dv\\
=&\int_MV\bigg(\frac{1}{2}\triangle (V^2)+4V^2\bigg)|W^{\pm}|^2dv\\
=&\frac{1}{2}\int_MV^2\triangle(V|W^{\pm}|^2)dv+4\int_MV^3|W^{\pm}|^2dv,
\endaligned
\end{equation*}
where we use the Green formulas to get the last equation by $V=0$ on
$\partial M$. So, we get the conclusion from \eqref{w}.

\end{proof}

\section{Proof of Theorem \ref{mainthm}}

We denote $F=\frac{1}{2}(V^2+|\nabla V|^2)$, then differentiating it and we get by the equation \eqref{1}
\begin{equation*}
F_i=VE_{ij}V_j.
\end{equation*}
Since $E_{ij,i}=-R_ig_{ij}=0$,
differentiating it again and we arrive
\begin{equation}\label{lapF}
\triangle F=V^2|E|^2+E(\nabla V,\nabla V).
\end{equation}
in view of the equations \eqref{1} and \eqref{2}.
If $W^\pm=0$ on $\partial M$, then we get by Green formulas
\begin{eqnarray}\label{6-19-10}
\int_MV\triangle(F|W^\pm|^2)dv+4\int_MVF|W^\pm|^2dv=0,
\end{eqnarray}
which implies
\begin{eqnarray*}
0&=&\int_MV\bigg(V^2|E|^2+E(\nabla V,\nabla V)\bigg)|W^\pm|^2 dv+2\int_MV^2E(\nabla|W^{\pm}|^2,\nabla V)dv\\
& &+\int_M V(V^2+|\nabla V|^2)(|\nabla W^{\pm}|^2+8|W^\pm|^2-72det W^\pm)dv.
\end{eqnarray*}
in view of the equation \eqref{w} and \eqref{lapF}. Thus,
\begin{equation}\label{6-24-3}
\aligned
0\geq&\int_MV\bigg(E(\nabla V,\nabla V)|W^\pm|^2+8|\nabla V|^2|W^\pm|^2\\
&-72|\nabla V|^2\det W^\pm\bigg)dv+2\int_M V^2E(\nabla|W^{\pm}|^2,\nabla V)dv\\
&+\int_MV^3(|E|^2|W^\pm|^2+8|W^\pm|^2-72\det W^\pm)dv.
\endaligned
\end{equation}
From Lemma \ref{lem6-21-1}, we obtain
\begin{equation*}
\aligned
&\int_MV\bigg(E(\nabla V,\nabla V)|W^\pm|^2+8|\nabla V|^2|W^\pm|^2
-72|\nabla V|^2\det W^\pm\bigg)dv\\=&\int_MV\bigg(17|\nabla V|^2|W^\pm|^2
-20E(\nabla V,\nabla V)|W^\pm|^2\bigg)dv
\endaligned
\end{equation*}
From Lemma \ref{lem6-21-2}, we obtain
\begin{equation*}
\aligned
&-20\int_MVE(\nabla V,\nabla V)|W^\pm|^2dv\\
=&10\int_MV^3
|E|^2|W^\pm|^2dv+10\int_MV^2
E(\nabla V,\nabla |W^\pm|^2)dv.
\endaligned
\end{equation*}
Taking the two equations above into \eqref{6-24-3}, we arrive
\begin{equation}\label{9-21-1}
\aligned
0\geq&\int_M\bigg(17V|\nabla V|^2|W^\pm|^2+12V^2E(\nabla|W^{\pm}|^2,\nabla V)\\
&+V^3(11|E|^2|W^\pm|^2+8|W^\pm|^2-72\det W^\pm)\bigg)dv.
\endaligned
\end{equation}
Using Lemma \ref{lem6-24-1}, \eqref{9-21-1} becomes
\begin{equation}\label{6-24-6}
\aligned
0\geq&\int_M\bigg(16V|\nabla V|^2|W^\pm|^2+12V^2E(\nabla|W^{\pm}|^2,\nabla V)\\
&+V^3(11|E|^2|W^\pm|^2+16|W^\pm|^2-144\det W^\pm)\\
&+V^2\langle\nabla V,\nabla|W^\pm|^2\rangle\bigg)dv.
\endaligned
\end{equation}
If $\nabla V\neq0$, we have from \eqref{6-19-5} and Lemma \ref{lem6-19-2},
\begin{equation}\label{6-24-5}
\langle\nabla|W^{\pm}|^2,\nabla V\rangle=4V|W^{\pm}|^2-16V\det W^{\pm}+\frac{8}{3}VE_{11}|W^{\pm}|^2,
\end{equation}
which implies
\begin{equation}\label{6-19-6}
\aligned
&E(\nabla|W^{\pm}|^2,\nabla V)\\
=&4VE_{11}|W^{\pm}|^2-16VE_{11}\det W^{\pm}+\frac{8}{3}VE^2_{11}|W^{\pm}|^2.
\endaligned
\end{equation}
We denote by
$$M_1=\{p\in M, \nabla V(p)\neq0\}.$$
By Sard theorem, we know
$$Vol(M \backslash M_1)=0.$$
So, we can get by putting \eqref{6-24-5} and \eqref{6-19-6} into \eqref{6-24-6}
\begin{equation}
\aligned
0\geq&\int_MV^3\bigg(16\frac{1}{V^2}|\nabla V|^2|W^\pm|^2\\&+48E_{11}|W^{\pm}|^2
-192E_{11}\det W^\pm+32E^{2}_{11}|W^\pm|^2\\
&+(11|E|^2|W^\pm|^2+16|W^\pm|^2-144\det W^\pm)\\
&+4|W^{\pm}|^2-16\det W^{\pm}+\frac{8}{3}E_{11}|W^{\pm}|^2\bigg)dv.
\endaligned
\end{equation}
By the inequality (3.12) in \cite{Gur}, we know
\begin{equation}
|\det W^\pm|\leq\frac{1}{24\sqrt{6}}|W^\pm|^3.
\end{equation}
Combining with \eqref{6-24-7}, we get
\begin{eqnarray*}
0&\geq&16\int_MV|\nabla V|^2|W^\pm|^2dv\\&&+\int_{M}V^3|W^\pm|^2\bigg(\frac{11}{9}
|W^\pm|^2+(48+\frac{8}{3})E_{11}+(32+\frac{132}{9})E^2_{11}\\
& &-\frac{8}{\sqrt{6}}|E_{11}||W^\pm|-\frac{20}{3\sqrt{6}}|W^\pm|+20\bigg)dv\\
&\geq&\int_{M}V^3|W^\pm|^2\bigg(\frac{5}{9}|W^\pm|^2-\frac{8}{\sqrt{6}}|E_{11}||W^\pm|+\frac{24}{5}E^2_{11})\\
& &+(\frac{2}{3}|W^\pm|^2-\frac{20}{3\sqrt{6}}|W^\pm|+\frac{25}{9})\\
& &+(\frac{628}{15}E^2_{11}+\frac{152}{3}E^2_{11}+\frac{155}{9})\bigg)dv\geq0.
\end{eqnarray*}
So $W^\pm=0$ if $\nabla V\neq 0$, then we get $W^\pm=0$ on $M$
from the continuity of $W^\pm$. By \eqref{7-9-6}, we know $T^\pm=T=0$.
Then with a similar discussion of Lemma 4.2 in \cite{CC} (or Lemma 4 in \cite{BDR}), we know $W=0$, i.e. $M$ is locally conformally flat.
Then Theorem \ref{mainthm} follows from  Theorem \ref{thmK} (Theorem 3.1 in\cite{K}).
In fact, Theorem \ref{thm2} follows from  Theorem \ref{thmK} as well.

\end{document}